\let\myorg@bibitem\bibitem
\def\bibitem#1#2\par{%
	\@ifundefined{bibitem@#1}{%
		\myorg@bibitem{#1}#2\par
	}{%
		\begingroup
		\color{\csname bibitem@#1\endcsname}%
		\myorg@bibitem{#1}#2\par
		\endgroup
	}%
}
\newtheorem{rem}{Remark}[section]
\newtheorem{thm}{Theorem}[section]
\newcommand{\hf}{\nicefrac{1}{2}}
\newcommand{\reff}[1]{{\rm (\ref{#1})}}
	\newcommand\be {\begin{equation}}
	\newcommand\ee {\end{equation}}
\newcommand{\R}{\mathbb{R}}            
\newcommand{\bD}{\bm{D}}
\newcommand{\bE}{\bm{E}}
\newcommand{\ve}{\varepsilon}          
\numberwithin{equation}{section}
\title{Structure-preserving numerical method for \\ Maxwell-Amp\`{e}re Nernst-Planck model}
\date{\today}
\begin{document}
	
	\author{
Zhonghua Qiao\thanks{Department of Applied Mathematics and Research Institute for Smart Energy, The Hong Kong Polytechnic University, Hung Hom, Hong Kong. (zhonghua.qiao@polyu.edu.hk)},~
\and
Zhenli Xu\thanks{School of Mathematical Sciences, MOE-LSC, CMA-Shanghai and Shanghai Center for Applied Mathematics, Shanghai Jiao Tong University, Shanghai 200240, China. (xuzl@sjtu.edu.cn)},~
\and
Qian Yin\thanks{Corresponding author. School of Mathematical Sciences, MOE-LSC and CMA-Shanghai, Shanghai Jiao Tong University, Shanghai 200240, China. (sjtu\_yinq@sjtu.edu.cn)},~
\and
Shenggao Zhou\thanks{School of Mathematical Sciences, MOE-LSC, CMA-Shanghai and Shanghai Center for Applied Mathematics, Shanghai Jiao Tong University, Shanghai 200240, China. (sgzhou@sjtu.edu.cn)}
}

 \maketitle

\begin{abstract}	
Charge dynamics play essential role in many practical applications such as semiconductors, electrochemical devices and transmembrane ion channels.  A Maxwell-Amp\`{e}re Nernst-Planck (MANP) model that describes charge dynamics via concentrations and the electric displacement is able to take effects beyond mean-field approximations into account.  To obtain physically faithful numerical solutions, we develop a structure-preserving numerical method  for the MANP model whose solution has several physical properties of importance. By the Slotboom transform with entropic-mean approximations, a positivity preserving scheme with Scharfetter-Gummel fluxes is derived for the generalized Nernst-Planck equations. To deal with the curl-free constraint, the dielectric displacement from the Maxwell-Amp\`{e}re equation is further updated with a local relaxation algorithm of linear computational complexity.  We prove that the proposed numerical method unconditionally preserves the mass conservation and the solution positivity at the discrete level, and satisfies the discrete energy dissipation law with a time-step restriction. Numerical experiments verify that our numerical method has expected accuracy and structure-preserving properties. Applications to ion transport with large convection, arising from boundary-layer electric field and Born solvation interactions, further demonstrate that the MANP formulation with the proposed numerical scheme has attractive performance and can effectively describe charge dynamics with large convection of high numerical cell P\'{e}clet numbers.

\bigskip

\noindent
{\bf Key words and phrases}: Poisson-Nernst-Planck equations; 
Maxwell--Amp\`{e}re Nernst--Planck equations;  Positivity preserving; Energy dissipation; Local curl-free algorithm;  Convection dominated problem

\noindent
\end{abstract}

\section{Introduction}

Charge transport is ubiquitous in various applications, such as semiconductors~\cite{2012semiconductor}, microfluidics~\cite{Schoch:RMP:08}, and biological ion channels~\cite{E:CP:98}. It is often described by the classical Poisson-Nernst-Planck (PNP) equations, which consist of Nernst-Planck (NP) equations for the diffusion and convection of the ionic concentrations. The convection is driven by the gradient of electric potential, which in turn is governed by the Poisson's equation. The PNP equations can be derived as the gradient flow of an electrostatic free energy that is based on mean-field approximations of ionic interactions.  However, the mean-field approximations break down in cases where the ionic correlations, steric effects, and dielectric inhomogeneity become nonnegligible.  To address these issues, {various modified PNP models with mathematical analysis have been proposed to achieve decent accuracy in the description of complicated charge transport~\cite{BAO:PRL:1997,HorngLinLiuBob_JPCB12, KBA:PRE:2007, SWZ_CMS18,  LJX:SIAP:2018,LZ:BJ:2011,  MXZ:SIAP:2021, citetwo}}.

Recently, a Maxwell-Amp\`{e}re Nernst-Planck (MANP) framework has been proposed to describe the charge dynamics using ionic concentrations and the electric displacement, rather than the electric potential, as the unknown quantities~\cite{Qiao2022ANP}. The MANP model is inspired by an observation that the motion of charged particles is driven by the electric field or electric displacement, not the electric potential itself. Meanwhile, the evolution of the electric displacement can be described by the Maxwell-Amp\`{e}re equation. In comparison with the PNP formulation, one of the advantages of the MANP model is that numerically solving a Poisson's equation with a variable coefficient can be replaced by performing at each time step a local curl-free relaxation algorithm of linear complexity. The formulation based on concentrations and the electric displacement also facilitates the design of robust methods for solving steady states of charged systems~\cite{BSD:PRE:2009, RM:PRL:2004, ZWL:PRE:2011}. In addition, the numerical solution to the MANP model has been shown to have several important physical properties, such as mass conservation, solution positivity, and free-energy dissipation~\cite{Qiao2022ANP}. Since the physical properties play a crucial role in both theoretical and numerical analysis of the model, it is highly desirable to design numerical methods that are able to guarantee these properties at the discrete level.

{Many efforts have been devoted to the development of structure-preserving numerical methods for the PNP-type equations, 
see e.g., ~\cite{LW:JCP:14, Gao2017JSC, Ding19JCP,  LM2020, Ding2020JCP,  hu2020NM, Metti:JCP:2016, liu2017DG, Qian2021positive, LiuChun2020positivity,LJX:SIAP:2018,SJXJ2021}.} In order to remain the solution positivity, the NP equations are often reformulated in the Slotboom variables. Liu and Wang \cite{LW:JCP:14} have proposed a free-energy satisfying finite difference scheme with conditional positivity preservation, based on the Slotboom transform that is also known as the non-logarithmic Landau formulation for the Fokker-Planck equation.  
Using harmonic-mean approximations of the Slotboom variables on half-grid points, structure-preserving finite difference schemes have been proposed to solve the PNP-type equations~\cite{Ding19JCP, Ding2020JCP}. Using harmonic-mean approximations can effectively overcome the problem of large condition number of the coefficient matrix, which is due to large variations in the exponentially transformed diffusion coefficients in the Slotboom transform. The Slotboom variables have also been used to derive a fully implicit scheme with unconditional structure-preserving properties in \cite{hu2020NM}. The convergence analysis for structure-preserving finite difference methods based on the Slotboom transform has been developed as well~\cite{Ding2022con}. 
Recently, nonlinear implicit schemes based on the energy gradient flow formulation have also been proposed in the works~\cite{LiuChun2020positivity, SJXJ2021}. The existence of a unique positive solution for the nonlinear schemes is proved by using the logarithmic singularity at zero. Rigorous analysis on optimal convergence order has been conducted as well in the work~\cite{LiuChun2020positivity}. 

We propose a structure-preserving scheme for the MANP equations in this work in order to obtain physically faithful numerical solutions.
Entropic-mean approximation of mobility functions on half-grid points is employed to derive the Scharfetter-Gummel fluxes for the NP equations, which are reformulated in the Slotboom variables. The electric displacement is first updated by numerically solving the Maxwell-Amp\`{e}re equation, and then followed by performing a local relaxation algorithm to reach the curl-free property of the electric field. Such a curl-free local algorithm was first developed to accomplish efficient long-range Coulombic interactions in molecular simulations~\cite{M:JCP:2002, MR:PRL:2002, PD:JPCM:2004,RM:JCP:2004MC, RM:PRL:2004, FH:PRE:2014,FXH:JCP:14}, and then extended to numerically solving the Poisson-Boltzmann type of equations~\cite{BSD:PRE:2009, ZWL:PRE:2011}. The curl-free property is achieved at the discrete level once the relaxation converges. Meanwhile, the Gauss's law is maintained during the local relaxation. This idea is based on the constraint minimization of a potential energy, whose minimizer has explicit expression in every iteration, and therefore it is of linear computational complexity. It is important to note that the relaxation approach can treat dielectric inhomogeneity naturally.  Therefore, the MANP formulation, coupled with the local relaxation algorithm, can efficiently avoid solving a Poisson's equation with variable coefficient, which is often the time-consuming step in semi-implicit time-stepping for the PNP formulations. Detailed numerical analysis is performed to show that the whole numerical algorithm can unconditionally preserve mass conservation and solution positivity at the discrete level, and keeps the discrete energy dissipation with a time-step restriction.  Extensive numerical tests are conducted to demonstrate that the numerical method has expected numerical accuracy, efficiency, and structure-preserving properties. Applications to ion transport with large convection, arising from boundary-layer electric field and Born solvation interactions, illustrate that the MANP formulation with the proposed numerical scheme can effectively handle large convection with high numerical cell P\'{e}clet numbers.

The rest of this paper is organized as follows. In Section~\ref{s:MANP}, we present the MANP model and show some intrinsic properties of the MANP model. We develop the numerical method for solving the MANP model and give the corresponding numerical analysis on structure-preserving properties of the proposed numerical scheme in Section~\ref{s:NumMethod}. Numerical tests are carried out to demonstrate the effectiveness of the algorithm in Section~\ref{s:NumRes}. The paper ends with some conclusions in Section~\ref{s:Con}.

\section{Maxwell-Amp\`{e}re Nernst-Planck model}\label{s:MANP}
Consider a charged system occupying a bounded domain $\Omega$ that includes an electrolyte solution of $M$ species of mobile ions and fixed charges.  The electrostatic free energy of such a system in a dimensionless form is given by
\begin{equation}\label{Energy}
F[c^1, \dots, c^M]=\int_\Omega \left[\kappa^2\ve |\nabla\phi|^2 + \sum_{\ell=1}^{M} c^\ell\left(\log c^\ell +\mu^{\ell,\text{cr}} \right)\right] d\bm{r},
\end{equation}
where $c^\ell=c^\ell(\bm{r},t)$ is the ionic concentration for the $\ell$-th species ($\ell=1,\dots, M$) at time $t$, $\phi=\phi(\bm{r},t)$ is the electric potential, $\ve(\bm{r})$ is the space-dependent relative dielectric coefficient, and $\kappa$ is a dimensionless parameter defined as the ratio of the Debye length to characteristic length. The electric potential is governed by the Poisson's equation
\[
-\nabla \cdot 2\kappa^2\varepsilon \nabla \phi =\rho,
\]
where the total charge density $\rho=\sum_{\ell=1}^M q^{\ell} c^{\ell}+\rho^f$, $q^{\ell}$ is the ionic valence, and $\rho^f=\rho^f(\bm{r})$ is the distribution of fixed charges. Here, $\mu^{\ell,\text{cr}}$ is the excess chemical potential accounting for ionic correlation~\cite{BAO:PRL:1997,LJX:SIAP:2018,MXZ:SIAP:2021,Wu:JCP:2002,Wang:PRE:2010,Qing:Langmuir2020} beyond the mean-field approximation. To include steric effects and Born solvation interactions, for instance, we consider
\begin{equation}
\mu^{\ell,\text{cr}}= -\frac{v^{\ell}}{v^0} \log(v^0c^0)+  \frac{\chi  (q^\ell)^2}{a^\ell}    \left(\frac{1}{\varepsilon} -1\right) ,
\end{equation}
where $c^0=\left(1-\sum_{\ell=1}^M v^{\ell}c^{\ell}\right)/v^0$ is the concentration of solvent molecules, $v^{\ell}$ and $v^0$ represent the volumes of ions of the $\ell$-th species and solvent molecules, $\chi$ is a dimensionless parameter, and $a^{\ell}$ is the Born radius for ions of the $\ell$-th species~\cite{MXZ:SIAP:2021, Qiao2022ANP}.

The dynamic of ions is governed by the conservation law
\[
\frac{\partial c^{\ell}}{\partial t}=-\nabla \cdot \bm{J}^{\ell},
\]
with the ionic flux given by
$$\bm{J}^{\ell}=- \kappa c^{\ell} \nabla \mu^{\ell}.$$
Here the total chemical potential is given by the first variation of the free energy, i.e.,
\[
	\mu^{\ell}= \log c^{\ell}+q^{\ell}\phi +\mu^{\ell,\text{cr}}.
\]
Therefore, the ion dynamic is described by modified Nernst-Planck {equations}
\begin{equation}\label{NP}
\frac{\partial c^{\ell}}{\partial t}=\nabla \cdot \kappa \left( \nabla c^\ell+q^\ell c^\ell \nabla \phi+c^\ell \nabla \mu^{\ell,\text{cr}}\right), ~\ell =1, \cdots, M.
\end{equation}

It is seen from \reff{NP} that the convection of ions is driven by the gradient of electric potential, i.e., the electric field. An alternative view to describe charge dynamics is based on the description via the electric displacement $\bD=\ve \bE$, where $\bE=-\nabla \phi$ is the electric field. The charge conservation reads
\begin{equation}
	\frac{\partial \rho}{\partial t}+\nabla\cdot \bm{j} =0,
\end{equation}
with the charge density current given by
\begin{equation}
\bm{j}=-\sum_{\ell=1}^{M} \kappa q^\ell c^\ell \nabla \mu^\ell.
\end{equation}
Application of the Gauss's law $\nabla \cdot 2\kappa^2\bD = \rho$ leads to
\begin{equation}
	\nabla\cdot \left(\frac{\partial \bm{D}}{\partial t} +\frac{\bm{j}}{2\kappa^2} \right)=0,\label{Gs}
\end{equation}
which gives the Maxwell--Amp\`{e}re law
\begin{equation}
	\frac{\partial \bm{D}}{\partial t} +\frac{\bm{j}}{2\kappa^2}= \bm{\Theta}.
\end{equation} 
Here $\bm{\Theta}$, as an additional degree of freedom to enforce the curl-free condition $\nabla \times (\bm{D}/\varepsilon)=\bm{0}$,  satisfies the Coulomb gauge condition $\nabla \cdot \bm{\Theta}= 0$. It has been shown that the curl free condition guarantees the existence of the electric potential satisfying the Poisson's equation~\cite{Qiao2022ANP}.  In summary, the charge dynamics can be described by the following Maxwell--Amp\`{e}re Nernst--Planck equations
\begin{equation}\label{MANP}
\left\{
\begin{aligned}
& \frac{\partial c^{\ell}}{\partial t}=\nabla \cdot \kappa \left(\nabla c^\ell-\frac{q^\ell c^\ell \bm{D}}{\varepsilon}+c^\ell \nabla \mu^{\ell,\text{cr}}\right),~  \ell =1, \cdots, M, \\
&\frac{\partial \bm{D}}{\partial t} =\sum_{\ell=1}^M \frac{q^\ell}{2 \kappa} \left(\nabla c^\ell-\frac{q^\ell c^\ell \bm{D}}{\varepsilon}+c^\ell \nabla \mu^{\ell,\text{cr}}\right) +\bm{\Theta}, \\
&\nabla \cdot \bm{\Theta}=0, \\
& \nabla \times\frac{\bm{D}}{\varepsilon}=\bm{0}.
\end{aligned}
\right.
\end{equation}

{The properties of the MANP model and its equivalence to the PNP model have been thoroughly studied in~\cite{Qiao2022ANP}}. {We assume that the solution to the MANP equations \eqref{MANP} with periodic boundary conditions exists} and has several physical properties of importance, such as solution positivity, mass conservation, and energy dissipation:
\begin{align}
&{ c^\ell (\boldsymbol{r},t) > 0 \text{~and~} c^0(\boldsymbol{r},t) >0},  \quad \boldsymbol{r}\in \Omega ,t>0,  \label{pos} \\
&\int _{\Omega} c^\ell (\boldsymbol{r},t) d\boldsymbol{r}=\int _{\Omega} c^\ell(\boldsymbol{r},0) d\boldsymbol{r}, \label{cons}\\
&\frac{d\mathcal{F}}{dt} = - \sum_{\ell=1}^M \int_{\Omega} \kappa c^\ell \left| \nabla \log c^\ell - q^\ell \frac{\boldsymbol{D}}{\ve}+\nabla \mu^{\ell,\text{cr}}  \right|^2 d\boldsymbol{r} \leq 0.  \label{disspt}
\end{align}
{Here the electrostatic free energy is defined as a functional of $c^\ell$:
\begin{equation}\label{EDc}
\mathcal{F}[c^1, \dots, c^M] = \int_{\Omega} \left(\kappa^2\frac{|\bm{D}|^2}{\varepsilon} + \sum_{\ell=1}^{M} c^\ell\left(\log c^\ell+\mu^{\ell,\text{cr}}\right)  \right) d\bm{r}, \text{ with } \nabla \cdot 2\kappa^2 \bm{D}=\rho.
\end{equation}
It is obvious that $\mathcal{F}[ c^1, \dots, c^M]$ is a convex energy functional with respect to $c^{\ell}$.}  {The positivity of $c^0$ is assumed in \reff{pos}, and the theoretical analysis on the positivity in equilibrium is referred to~\cite{citeone}.}

By the energy dissipation law~\reff{disspt}, the steady state of the MANP model reproduces the charge conserving Poisson--Boltzmann (ccPB) theory~\cite{Lee_JMP2014,Lee_NonL2010, Wan_PRX2014}
\begin{equation}
	-\nabla \cdot 2\kappa^2\varepsilon \nabla \phi=\sum_{\ell=1}^{M} \frac{q^\ell N^{\ell}}{\int_{\Omega} e^{-q^\ell \phi} d\bm{r}}  e^{-q^\ell \phi}  + \rho ^f,
\end{equation}
where $N^{\ell}$ is the total ionic mass of the $\ell$-th species. In practical applications, it is desirable and of great importance to design  numerical methods that are able to preserve the intrinsic properties~\reff{pos}-\reff{disspt} at the discrete level.

\section{Numerical method}\label{s:NumMethod}
\subsection{Discretization of Maxwell-Amp\`{e}re Nernst-Planck equations}
Let $\Delta t$ be the time step size. Denote by $c^{\ell,n}$ and $\bm{D}^{n}$ the approximations to $c^\ell$ and $\bm{D}$, respectively, at time $t_n:=n\Delta t$ for a nonnegative integer $n$. An Euler discretization of the time derivative leads to a semi-implicit scheme for the Nernst-Planck equations
\begin{equation}
\frac{c^{\ell,n+1}-c^{\ell,n}}{\Delta t}= -\nabla \cdot \bm{J}^{\ell, n} , \label{disnp}
\end{equation}
where
\[
\bm{J}^{\ell, n} = -\kappa\left(\nabla c^{\ell,n+1}-\frac{q^\ell c^{\ell,n+1} \bm{D}^n}{\ve}+c^{\ell,n+1}\nabla \mu^{\ell,\text{cr}}\right).
\]
Similarly, the Maxwell-Amp\`{e}re equation is discretized by
\begin{equation}\label{disam}
\frac{\bm{D}^{*}-\bm{D}^{n}}{\Delta t} =-\sum _{\ell=1}^M \frac{q^\ell \bm{J}^{\ell, n}}{2\kappa^2 } +\bm{\Theta}^n,
\end{equation}
where $\bm{D}^{*}$ is an interim approximation to $\bm{D}^{n+1}$. It follows from~\reff{disnp} and \reff{disam} that $\bm{D}^{*}$ satisfies the Gauss's law for the updated concentrations $c^{\ell,n+1}$, provided that $\bm{D}^{n}$ satisfies the Gauss's law. In the next step, $\bm{D}^{*}$ will be further corrected in a Gauss-law-satisfying manner to fulfill the curl-free constraint in~\reff{MANP}.
In general, $\bm{\Theta}^n$ can be arbitrarily chosen as long as it satisfies the divergence free condition. However,
an appropriate estimate of $\bm{\Theta}^n$ can lead to a $\bm{D}^*$ that is closer to the curl-free condition, and reduce the number of relaxation steps in the correction step from $\bm{D}^{*}$ to $\bm{D}^{n+1}$. {Extensive numerical tests have demonstrated that with
\begin{equation}\label{Theta2}
	\bm{\Theta}^n=\frac{\bm{D}^{n} - \bm{D}^{n-1}}{{\Delta t}}+\sum _{\ell=1}^M \frac{q^\ell \bm{J}^{\ell, n-1}}{2\kappa^2},
\end{equation}
the number of relaxation steps is small, as low as one step~\cite{Qiao2022ANP}. Thus, \reff{Theta2} shall be considered in our numerical simulations.}


For simplicity of presentation, we consider spatial discretization in a rectangular domain $\Omega=[0, L_1]\times [0,L_2]$ with periodic boundary conditions. The domain is covered by a uniform grid  with the grid spacing $\Delta x$ and $\Delta y$ in the two coordinate directions. Denote by $\Delta \Omega =\Delta x \Delta y$ the area of a unit cell. Let $N_x$ and $N_y$ be the number of grid points along each dimension. Let $\bm{D}=(D_x, D_y)$. $c_{i,j}^{\ell,n}$ stands for the numerical approximation of $c^{\ell,n}$ on the grid point $\left(i\Delta x, j\Delta y\right)$, and $D_{i+1/2, j}^{n}(\text{resp. } D_{i, j+1/2}^{n})$ stands for the numerical approximation of $D_x^{n}\left(\text{resp. }D_y^n\right)$ on the grid point $\left((i+1/2)\Delta x , j \Delta y\right)\left(\text{resp. }\left(i\Delta x , (j+1/2) \Delta y\right)\right)$ for  $i=1, \ldots, N_{x}$ and  $j=1, \ldots, N_{y}$.

By the curl-free condition in \reff{MANP}, there exists a scalar function $\phi$ such that $ \bm{D}/\varepsilon=-\nabla \phi$. Using $\phi$, the semi-discrete Nernst-Planck equations~\reff{disnp} with the Slotboom transformation~\cite{Ding19JCP, Ding2020JCP, LM2020,LW:JCP:14} can be rewritten as
\begin{equation}\label{Slotboom}
	\frac{c^{\ell,n+1}-c^{\ell,n}}{\Delta t}=\nabla \cdot \kappa \left[ e^{-g^{\ell,n} }\nabla (e^{g^{\ell,n}}c^{\ell,n+1}) \right],
\end{equation}
where $g^{\ell,n}= q^\ell \phi^n+\mu^{\ell,\text{cr}, n}$. Central differencing approximation leads to
\begin{equation}
	\frac{c^{\ell,n+1}_{i,j}-c^{\ell,n}_{i,j}}{\Delta t}=-\frac{J^{\ell,n}_{i+\frac{1}{2},j}-J^{\ell,n}_{i-\frac{1}{2},j}}{\Delta x}-\frac{J^{\ell,n}_{i,j+\frac{1}{2}}-J^{\ell,n}_{i,j-\frac{1}{2}}}{\Delta y},\label{disc}
\end{equation}
where the numerical fluxes are given by
$$
J_{i+\frac{1}{2},j}^{\ell,n}=-\kappa  e^{-g^{\ell,n}_{i+\frac{1}{2},j}}\frac{e^{g^{\ell,n}_{i+1,j}}c^{\ell,n+1}_{i+1,j}-e^{g^{\ell,n}_{i,j}}c^{\ell,n+1}_{i,j}}{\Delta x}
$$
and
$$
J_{i,j+\frac{1}{2}}^{\ell,n}=-\kappa e^{-g^{\ell,n}_{i,j+\frac{1}{2}}}\frac{e^{g^{\ell,n}_{i,j+1}}c^{\ell,n+1}_{i,j+1}-e^{g^{\ell,n}_{i,j}}c^{\ell,n+1}_{i,j}}{\Delta y}.
$$
Here the values at half grid points are approximated by the entropic mean
\begin{equation}
	e^{-g^{\ell,n}_{i+\frac{1}{2},j}}=\frac{g^{\ell,n}_{i+1,j}-g^{\ell,n}_{i,j}}
	{e^{g^{\ell,n}_{i+1,j}}-e^{g^{\ell,n}_{i,j}}}
	~~\text{and}~~
	e^{-g^{\ell,n}_{i,j+\frac{1}{2}}}=\frac{g^{\ell,n}_{i,j+1}-g^{\ell,n}_{i,j}}
	{e^{g^{\ell,n}_{i,j+1}}-e^{g^{\ell,n}_{i,j}}}.\label{har}
\end{equation}
\begin{rem}\label{Means}
There are other approximations of $e^{-g^{\ell,n}_{i+1/2,j}}$ that can be used to derive different numerical schemes for the NP equations.
\begin{compactenum}
\item[\rm (1)]
By the arithmetic mean $$e^{-g^{\ell,n}_{i+1/2,j}}=\frac{e^{-g^{\ell,n}_{i+1,j}}+e^{-g^{\ell,n}_{i,j}}}{2},$$ the function $B(\cdot)$ in the numerical flux~\reff{J} reads $B(z)=(1+e^{-z})/2$.
\item[\rm (2)]
By the geometric mean $$e^{-g^{\ell,n}_{i+1/2,j}}=e^{-\frac{g^{\ell,n}_{i+1,j}+g^{\ell,n}_{i,j}}{2}},$$
the function $B(\cdot)$ in the numerical flux~\reff{J} reads $B(z)=e^{-z/2}$.
\item[\rm (3)]
By the harmonic mean approximation
$$e^{-g^{\ell,n}_{i+1/2,j}}=\frac{2 e^{-g^{\ell,n}_{i+1,j}-g^{\ell,n}_{i,j}}}{e^{-g^{\ell,n}_{i+1,j}}+e^{-g^{\ell,n}_{i,j}}},$$
the function $B(\cdot)$ in the numerical flux~\reff{J} reads $B(z)=2/\left(1+e^z\right)$, which has been shown to result in a linear system with a well-conditioned coefficient matrix~\cite{Ding19JCP,QianWangZhou_JCP19}.
One can find that the coefficient in the stencil is naturally bounded, which plays a pivotal role in the estimate on condition numbers of the coefficient matrix of a semi-implicit scheme.
\end{compactenum}
\end{rem}

By the central differencing
\begin{equation}
		\frac{D_{i+\frac{1}{2},j}}{\varepsilon_{i+\frac{1}{2},j}}\approx-\frac{\phi_{i+1,j}-\phi_{i,j}}{\Delta x}, ~~\frac{D_{i,j+\frac{1}{2}}}{\varepsilon_{i,j+\frac{1}{2}}}\approx-\frac{\phi_{i,j+1}-\phi_{i,j}}{\Delta y}\label{dphi},
\end{equation}
we have
\begin{equation}\label{J}
J_{i+\frac{1}{2},j}^{\ell,n}=-\frac{\kappa}{\Delta x} \left[
B\left(-dg^{\ell,n}_{i+\frac{1}{2},j}\right)c^{\ell,n+1}_{i+1,j}- B\left(dg^{\ell,n}_{i+\frac{1}{2},j}\right)c^{\ell,n+1}_{i,j} \right],
\end{equation}
where $B(\cdot)$ is the Bernoulli function defined by
\[
B(z) = \frac{z}{e^z -1}~\text{for}~ z\neq0, ~\text{and} ~ B(0)=1,
\]
and
$$dg_{i+\frac{1}{2},j}^{\ell,n}=g^{\ell,n}_{i+1,j}-g^{\ell,n}_{i,j}=-\Delta x q^{\ell}\frac{D_{i+1/2,j}^n}{\ve_{i+1/2,j}}+\mu^{\ell,\text{cr}, n}_{i+1,j}-\mu^{\ell,\text{cr},n }_{i,j}.$$
The numerical fluxes $J_{i-\hf,j}^{\ell,n}$, $J_{i,j+\hf}^{\ell,n}$, and $J_{i,j-\hf}^{\ell,n}$ can be given analogously. {We assume that $c^{0,n}_{i,j}$ on grid points are all positive, so that $\mu^{\ell,\text{cr},n }_{i,j}$ for $\ell = 1, \cdots, M$ are well defined. It is non-trivial to derive a sufficient condition to guarantee the positivity of $c^{0,n}_{i,j}$, which is explicitly treated in our numerical scheme. We leave the numerical analysis on its positivity to our future work. }

{As mentioned in Remark~\ref{Means}, other approximations of $e^{-g^{\ell,n}_{i+1/2,j}}$ at half points can be employed in spatial discretization. The entropic mean presented in \eqref{har} results in the well-known Scharfetter-Gummel scheme~\cite{Farrel1991SG,Scharfetter1969IEEE}, which can automatically reduce to an upwinding scheme when the convection arising from electric fields, Born solvation and steric effects is large.}

\begin{rem}\label{ModifiedSG}
Modified Scharfetter-Gummel fluxes can be derived by looking for a Lipschitz continuous function $B: \R \to \R^+$ in the numerical flux~\reff{J} satisfying conditions:
\[
\lim_{z\to0} B(z) =1,~ \lim_{z\to +\infty} B(z) =0,  \mbox{~{\rm and}} \lim_{z\to -\infty} B(z)/z =-1.
\]
Such modified Scharfetter-Gummel fluxes, which automatically reduce to upwinding fluxes for convection-dominated cases, can be used in other spatial discretization methods.
\end{rem}

With the numerical fluxes~\reff{J}, the fully discrete Maxwell-Amp\`{e}re equation reads
\begin{align}
	\frac{D^{*}_{i+\frac{1}{2},j}-D^{n}_{i+\frac{1}{2},j}}{\Delta t} &=- \sum _\ell^M \frac{q^\ell J^{\ell,n}_{i+\frac{1}{2},j}}{2\kappa^2} +\Theta^n_{i+\frac{1}{2},j}, \label{D*1}\\
	\frac{D^{*}_{i,j+\frac{1}{2}}-D^{n}_{i,j+\frac{1}{2}}}{\Delta t} &=- \sum _\ell^M \frac{q^\ell J^{\ell,n}_{i,j+\frac{1}{2}}}{2\kappa^2} +\Theta^n _{i,j+\frac{1}{2}}. \label{D*2}
\end{align}
Notice that the numerical schemes~\reff{D*1} and \reff{D*2} for the Maxwell-Amp\`{e}re equation are explicit after $c^{\ell,n+1}$ is obtained.
\begin{rem}
Alternatively, one can employ second-order temporal discretization:
\begin{align}
	&\frac{3c^{\ell,n+1}_{i,j}-4c^{\ell,n}_{i,j}+c^{\ell,n-1}_{i,j}}{2\Delta t}=
	-\frac{J^{\ell,n+1}_{i+\frac{1}{2},j}-J^{\ell,n+1}_{i-\frac{1}{2},j}}{\Delta x}-\frac{J^{\ell,n+1}_{i,j+\frac{1}{2}}-J^{\ell,n+1}_{i,j-\frac{1}{2}}}{\Delta y},\label{cn}\\
	&\frac{3D^{*}_{i+\frac{1}{2},j}-4D^{n}_{i+\frac{1}{2},j}+D^{n-1}_{i+\frac{1}{2},j}}{2\Delta t} =- \sum_\ell^M \frac{q^\ell J^{\ell,n+1}_{i+\frac{1}{2},j}}{2\kappa^2} +\Theta^{n+1}_{i+\frac{1}{2},j}, \label{D*1cn}\\
	&\frac{3D^{*}_{i,j+\frac{1}{2}}-4D^{n}_{i,j+\frac{1}{2}}+D^{n-1}_{i,j+\frac{1}{2}}}{2\Delta t} =- \sum_\ell^M \frac{q^\ell J^{\ell,n+1}_{i,j+\frac{1}{2}}}{2\kappa^2} +\Theta^{n+1} _{i,j+\frac{1}{2}}, \label{D*2cn}
\end{align}
where the numerical fluxes
$$
J_{i+\frac{1}{2},j}^{\ell,n+1}=-\frac{\kappa}{\Delta x} \left[
B\left(-dg^{\ell,n+1}_{i+\frac{1}{2},j}\right)c^{\ell,n+1}_{i+1,j}- B\left(dg^{\ell,n+1}_{i+\frac{1}{2},j}\right)c^{\ell,n+1}_{i,j} \right],
$$
and
$$
\Theta^{n+1}_{i+\frac{1}{2},j}=2\Theta^{n}_{i+\frac{1}{2},j}-\Theta^{n-1}_{i+\frac{1}{2},j}.$$
Here,
$$dg^{\ell,n+1}_{i+\frac{1}{2},j}=2dg^{\ell,n}_{i+\frac{1}{2},j}-dg^{\ell,n-1}_{i+\frac{1}{2},j}=2\left(g^{\ell,n}_{i+1,j}-g^{\ell,n}_{i,j}\right)-\left(g^{\ell,n-1}_{i+1,j}-g^{\ell,n-1}_{i,j}\right).$$
The numerical fluxes $J_{i-1/2,j}^{\ell,n+1}$, $J_{i,j+1/2}^{\ell,n+1}$, $J_{i,j-1/2}^{\ell,n+1}$ and $\Theta_{i,j+1/2}^{n+1}$ can be given analogously.
\end{rem}

\subsection{Local curl-free algorithm}\label{localsec}
The interim electric displacement $\bm{D}^{*}$ updated in \reff{disam} often fails to satisfy the curl-free constraint in the MANP formulation~\reff{MANP}. A Gauss-law satisfying relaxation algorithm~\cite{BSD:PRE:2009, MR:PRL:2002,RM:JCP:2004MC,  RM:PRL:2004, ZWL:PRE:2011} has been proposed to achieve the curl-free constraint, starting from $\bm{D}^{*}$. The relaxation algorithm has been incorporated to numerically solve the MANP model in the work~\cite{Qiao2022ANP}.  Here we recall the main steps with attention on how to couple the relaxation algorithm with the proposed numerical scheme for the NP equations.

Following the idea originally proposed in~\cite{MR:PRL:2002}, one considers a convex constraint optimization problem
\begin{equation}\label{MinPro}
\min \limits_{\bm{D}}~\mathcal{F}_{\rm pot}[\bm{D}] :=  \int_{\Omega} \frac{\kappa^2|\bm{D}|^2}{\varepsilon} d\bm{r}, ~~\text{s.t.}~ \nabla \cdot 2\kappa^2 \bm{D}=\rho.
\end{equation}
Using the Lagrange multiplier method, it is easy to find that there exists the unique minimizer $\bm{D}=-\ve \nabla \phi$, where $\phi$ is a Lagrange multiplier satisfying the Poisson's equation $-\nabla \cdot 2\kappa^2\ve \nabla \phi = \rho$. Obviously, the minimizer $\bm{D}$ satisfies the curl-free constraint $\nabla \times (\bm{D}/\ve) = 0$.  This observation offers an optimization approach starting from $\bm{D}^{*}$ to reach the curl-free constraint, while still retaining the {discrete} Gauss's law.

\begin{figure}[h!]
	\centering
	\includegraphics[scale=0.8]{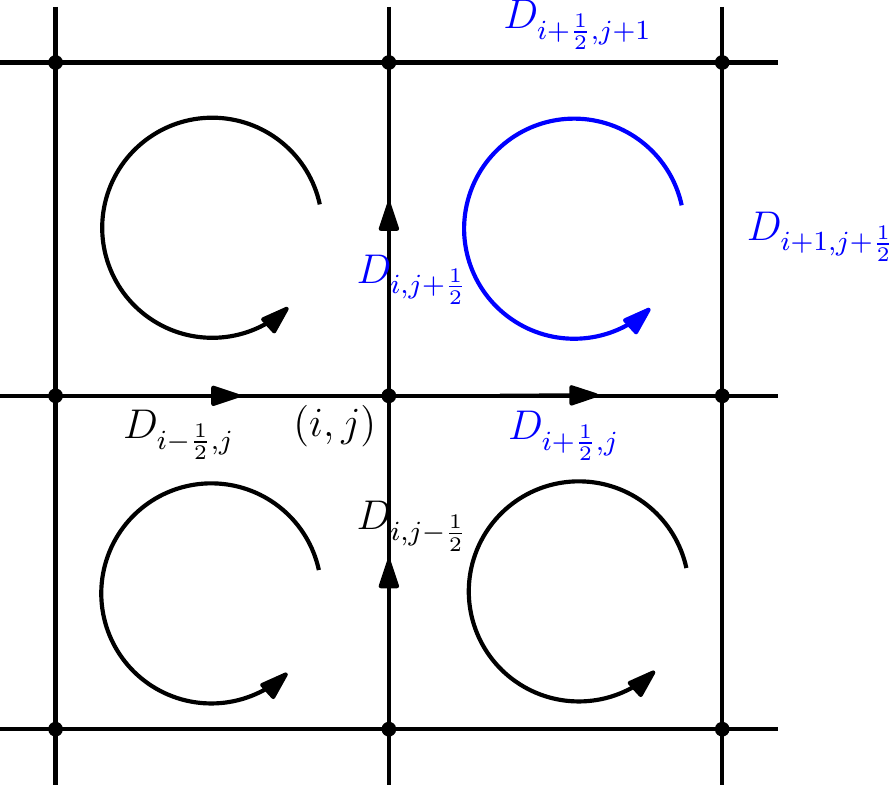}
	\caption{Update diagram of the electric displacements associated to a node $(i,j)$.}
	\label{fD}
\end{figure}

With central differencing, the electric energy in the optimization problem~\reff{MinPro} is approximated with second-order accuracy by
\begin{equation}\label{Fpoth}
	\mathcal{F}_{\rm pot}^{h}=\Delta {\Omega} \kappa^2 \sum_ {i,j} \left( \frac{D^2_{i+\frac{1}{2},j}}{\varepsilon_{i+\frac{1}{2},j}}  + \frac{D^2_{i,j+\frac{1}{2}}}{\varepsilon_{i,j+\frac{1}{2}}} \right),
\end{equation}
and the Gauss's law is approximated by
\begin{equation}
	2\kappa^2\frac{D_{i+\frac{1}{2},j}-D_{i-\frac{1}{2},j}}{\Delta x}
	+2\kappa^2\frac{D_{i,j+\frac{1}{2}}-D_{i,j-\frac{1}{2}}}{\Delta y}
	=\sum_\ell q^\ell c^\ell_{i,j} +\rho^f _{i,j}.\label{gausslaw}
\end{equation}
This relaxation starts from the interim approximation, $\bm{D}^*$, {which satisfies the discrete Gauss's law~\reff{gausslaw}~\cite{Qiao2022ANP}}, and performs successive local updates of the electric displacements in each single cell to minimize $\mathcal{F}_{\rm pot}^{h}$~\cite{MR:PRL:2002}. For instance, consider a grid cell that has four nodes with indices $(i,j)$, $(i+1,j)$, $(i, j+1)$, and $(i+1, j+1)$, and four corresponding edges; cf. Fig.~\ref{fD}. Denote by $D_{i+1/2, j}$, $D_{i+1, j+1/2}$, $D_{i+1/2, j+1}$, and $D_{i, j+1/2}$ the four electric displacement components defined on the four edges.  Introducing an update flux $\eta$, one can update the displacements by
\begin{equation}
\begin{aligned}
	D_{i+\frac12, j} &\leftarrow D_{i+\frac12, j}+\frac{\eta}{\Delta y} ,\\
	D_{i+1, j+\frac12} &\leftarrow D_{i+1, j+\frac12}+\frac{\eta}{\Delta x},\\
	D_{i+\frac12, j+1} &\leftarrow D_{i+\frac12, j+1}-\frac{\eta}{\Delta y} ,\\
	D_{i, j+\frac12}     &\leftarrow D_{i, j+\frac12}-\frac{\eta}{\Delta x}.
\end{aligned}\label{updateD}
\end{equation}
As shown in Fig.~\ref{fD}, the {discrete} Gauss's law is rigorously maintained at all the four nodes after four circulatory updates in that the sum of entering and leaving fluxes are unchanged at each node during each update.  In each update, the optimal flux is determined by the minimization of the discrete energy $\mathcal{F}^h_{\text{pot}}$ locally and it can be calculated explicitly:
\begin{equation}\label{eta}
		\eta= - \frac{\Delta y(\Delta x)^2\left(\frac{D_{i+\frac{1}{2},j}}{\varepsilon_{i+\frac{1}{2},j}}-\frac{D_{i+\frac{1}{2},j+1}}{\varepsilon_{i+\frac{1}{2},j+1}}\right)+  \Delta x(\Delta y)^2\left(\frac{D_{i+1,j+\frac{1}{2}}}{\varepsilon_{i+1,j+\frac{1}{2}}}-\frac{D_{i,j+\frac{1}{2}}}{\varepsilon_{i,j+\frac{1}{2}}}\right)}{(\Delta x)^2 \left(\frac{1}{\varepsilon_{i+\frac{1}{2},j}}+\frac{1}{\varepsilon_{i+\frac{1}{2},j+1}} \right) +(\Delta y)^2 \left(\frac{1}{\varepsilon_{i,j+\frac{1}{2}}}+\frac{1}{\varepsilon_{i+1,j+\frac{1}{2}}} \right)}.
\end{equation}
Such a local update loops over all the cells for certain steps until a stopping criterion $\ve_{\rm tol}$ is met. From the description of the algorithm, analytical expressions are available for each update step. Therefore, the complexity of the algorithm is linear when the number of looping steps is bounded. This local update has robust convergence and stability by the fact that $\mathcal{F}_{\rm pot}^{h}$ is nonnegative and $\delta \mathcal{F}_{\rm pot}^{h}(\eta)\leq 0 $ when $\eta$ is given by~\reff{eta}. {Also, it can be proven that the curl-free constraint is achieved as the local update algorithm converges~\cite{Qiao2022ANP}.} The algorithm is also directly applicable to three dimensional cases by circulatorily updating electric displacements on each face of cubic cells.

In summary, the whole numerical algorithm for the MANP equations is presented Algorithm 1.

\begin{algorithm}[H] \label{A:alg}
	\caption{Numerical method for the MANP equations}
	{\bf Input:}  Final time $T$, stopping criterion $\ve_{\rm tol}$ for the local curl-free algorithm, initial ionic distributions $c^{\ell, 0}$, and the corresponding displacement field $\bm{D}^0$ that satisfies the {discrete} Gauss's law;
	\begin{algorithmic}[1]
 		\State Solve the semi-implicit scheme~\reff{disc} for the NP equations to get $c^{\ell,n+1}$, $\ell=1, \dots, M$;
		 \State Given a divergence free $\bm{\Theta}^n$ in \reff{Theta2} and obtained $c^{\ell,n+1}$, explicitly solve the scheme~\reff{D*1}-\reff{D*2} for the Maxwell-Amp\`{e}re equation to obtain an interim displacement field $\bm{D}^*$;
 		\State Perform the local curl-free algorithm for $\bm{D}^*$ to obtain $\bm{D}^{n+1}$.
	\State If $T\leq t^{n+1}$, stops; Otherwise, go to Step 1.
	\end{algorithmic}
\end{algorithm}

\begin{rem}
Note that Algorithm $1$ can be readily extended to three dimensions in a dimension-by-dimension manner.
\end{rem}


\subsection{Analysis on structure-preserving properties}\label{s:TheAna}
The proposed numerical scheme~\reff{disc} and \reff{D*1}-\reff{D*2}, combined with the local curl-free algorithm in Section~\ref{localsec}, is able to preserve mass conservation, solution positivity, and energy dissipation of the MANP equations at the discrete level.

\begin{thm}\label{thm1}
	(Mass conservation)
	The numerical scheme~\reff{disc} for the NP equations with periodic boundary conditions satisfies the conservation law of mass at the discrete level, in the sense that  the total concentration remains constant in time, i.e.,
	\begin{equation}
		\Delta\Omega \sum_{i=1}^{N_x} \sum_{j=1}^{N_y}   c_{i,j}^{\ell, n+1} =\Delta \Omega \sum_{i=1}^{N_x}  \sum_{j=1}^{N_y}  c_{i,j}^{\ell, n} .
	\end{equation}
\end{thm}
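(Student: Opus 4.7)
The plan is to show mass conservation by the standard telescoping argument: sum the fully discrete NP scheme over all grid points, multiply through by the cell area, and use periodicity to annihilate the surface flux terms. The conclusion then reduces to reading off the resulting identity.

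First, I would multiply \reff{disc} through by $\Delta\Omega = \Delta x \Delta y$ and sum over $i = 1,\dots, N_x$ and $j = 1,\dots, N_y$, obtaining
\[
\Delta\Omega \sum_{i,j} \frac{c^{\ell,n+1}_{i,j}-c^{\ell,n}_{i,j}}{\Delta t}
= -\Delta y \sum_{j} \sum_{i}\bigl(J^{\ell,n}_{i+\frac{1}{2},j}-J^{\ell,n}_{i-\frac{1}{2},j}\bigr)
-\Delta x \sum_{i} \sum_{j}\bigl(J^{\ell,n}_{i,j+\frac{1}{2}}-J^{\ell,n}_{i,j-\frac{1}{2}}\bigr).
\]
Each inner sum on the right-hand side is telescoping; for instance $\sum_{i=1}^{N_x}(J^{\ell,n}_{i+\frac12,j}-J^{\ell,n}_{i-\frac12,j}) = J^{\ell,n}_{N_x+\frac12,j}-J^{\ell,n}_{\frac12,j}$.

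Next, I would invoke the periodic boundary conditions. Under periodicity the half-grid flux values satisfy $J^{\ell,n}_{N_x+1/2,j} = J^{\ell,n}_{1/2,j}$ and $J^{\ell,n}_{i,N_y+1/2} = J^{\ell,n}_{i,1/2}$ (this is inherited from the periodicity of $c^{\ell,n+1}$, $\mu^{\ell,\text{cr},n}$, and $\bm{D}^n$ entering the Scharfetter--Gummel flux \reff{J}). Hence both telescoping sums vanish identically, leaving
\[
\Delta\Omega \sum_{i,j} \frac{c^{\ell,n+1}_{i,j}-c^{\ell,n}_{i,j}}{\Delta t} = 0,
\]
which after rearrangement yields the stated identity.

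There is no serious obstacle here; the argument is essentially the discrete divergence theorem on a periodic grid, and the conservative flux form of \reff{disc} makes the telescoping automatic. The only point worth flagging is that one should verify that the entropic-mean definitions of $e^{-g^{\ell,n}_{i+1/2,j}}$ in \reff{har} are consistent across the periodic boundary, i.e., that $g^{\ell,n}$ (and therefore the fluxes $J^{\ell,n}_{\pm 1/2}$) are treated as periodic in each coordinate direction; this is standard for the scheme as written.
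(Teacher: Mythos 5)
Your proposal is correct and follows essentially the same route as the paper: sum the conservative-form scheme \reff{disc} over all grid points, telescope the flux differences (summation by parts), and use periodicity to cancel the boundary fluxes. Your extra remark about the periodic consistency of the entropic-mean fluxes is a harmless elaboration of what the paper compresses into ``summation by parts and periodic boundary conditions.''
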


\begin{proof}
	Summing both sides of~\reff{disc} leads to
\[
		\sum_{i,j} \Delta \Omega \left( c^{\ell,n+1}_{i,j}-c^{\ell,n}_{i,j} \right) = -\Delta t \sum_{i,j} \Delta \Omega \left( \frac{J^{\ell,n}_{i+\frac{1}{2},j} - J^{\ell,n}_{i-\frac{1}{2},j} }{\Delta x} + \frac{J^{\ell,n}_{i,j+\frac{1}{2}} - J^{\ell,n}_{i,j-\frac{1}{2}} }{\Delta y} \right)=0,
\]
	where the summation by parts and periodic boundary conditions have been used in the second equality.
\end{proof}

\begin{thm}\label{thm2}
	(Positivity preservation) The numerical scheme~\reff{disc} for the NP equations preserves the solution positivity in time,  i.e.,
	$$
	c_{i,j}^{\ell, n+1}>0 \text { for } i=1, \cdots, N_x,  j=1,  \cdots, N_y,
	$$
provided that $c_{i,j}^{\ell, n}>0.$
\end{thm}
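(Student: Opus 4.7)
The plan is to recast the scheme (\ref{disc}) as a linear system $A\,c^{\ell,n+1} = c^{\ell,n}$, where $c^{\ell,n+1}$ and $c^{\ell,n}$ denote the column vectors obtained by stacking the grid values, and to prove that $A$ is a non-singular M-matrix whose inverse is entrywise strictly positive. Once this is in hand, the conclusion is immediate: if $c^{\ell,n}_{i,j}>0$ for every $(i,j)$, then $c^{\ell,n+1} = A^{-1}c^{\ell,n}>0$ componentwise.

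First I would read off the entries of $A$ by substituting the Scharfetter--Gummel flux (\ref{J}) into (\ref{disc}). Since the Bernoulli function satisfies $B(z)>0$ for every $z\in\mathbb{R}$, each diagonal entry has the form $1+\Delta t\cdot(\text{positive})>0$, while every off-diagonal entry is a negative multiple of some $B(\pm dg)$; thus $A$ is a Z-matrix with positive diagonal. The key structural ingredient is that each column of $A$ sums to exactly $1$. This can be verified directly from the explicit entries using the Bernoulli identity $B(-z)-B(z)=z$, or more conceptually: every edge flux $J^{\ell,n}_{i+1/2,j}$ enters the equations at nodes $(i,j)$ and $(i+1,j)$ with opposite signs, so the column-sum computation telescopes. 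This is nothing but the matrix-level encoding of the discrete mass conservation proved in Theorem~\ref{thm1}, i.e.\ $\mathbf{1}^{T}A = \mathbf{1}^{T}$. Combined with the sign pattern of $A$, it yields the strict column diagonal dominance
\begin{equation*}
A_{(k,l),(k,l)} \;-\; \sum_{(i,j)\neq (k,l)} \bigl|A_{(i,j),(k,l)}\bigr| \;=\; 1 \;>\; 0.
\end{equation*}

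Consequently $A^{T}$ is a strictly row diagonally dominant Z-matrix with positive diagonal, which makes $A^{T}$, and hence $A$, a non-singular M-matrix; in particular $A^{-1}\ge 0$ entrywise. To upgrade this to strict positivity I would invoke irreducibility: with periodic boundary conditions the directed graph associated with $A$ is strongly connected, because each node is linked to its four nearest neighbours through strictly positive Bernoulli weights in the off-diagonal entries of $A$. The standard result that the inverse of an irreducible non-singular M-matrix is entrywise strictly positive then gives $A^{-1}>0$, and the positivity hypothesis on $c^{\ell,n}$ transfers to $c^{\ell,n+1}$. The main technical care lies in justifying the column-sum identity and in checking that the Bernoulli weights never vanish so that irreducibility is genuine; once these are secured, the M-matrix machinery delivers positivity with no restriction on $\Delta t$, $\Delta x$, or $\Delta y$, which matches the unconditional claim of the theorem.
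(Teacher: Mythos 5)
Your proposal is correct and follows essentially the same route as the paper: both recast \reff{disc} as a linear system, read off from the Bernoulli weights that the coefficient matrix is a Z-matrix with positive diagonal whose columns sum to $1$ (the matrix form of discrete mass conservation), and conclude it is a nonsingular M-matrix with positive inverse. Your extra irreducibility step to upgrade $A^{-1}\ge 0$ to strict positivity is a welcome refinement that the paper leaves implicit, but it does not change the argument in substance.
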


\begin{proof}
The fully discrete numerical scheme for the NP equations can be rewritten in a matrix form
\begin{equation}\label{linearc}
	\mathcal{L} c^{\ell,n+1}=c^{\ell,n},
\end{equation}
where $\mathcal{L}$ is a coefficient matrix involving $\bm{D}^n$ and $\mu^{\ell,\text{cr}, n}$.
    For simplicity of presentation, we introduce an index conversion notation
    $$
    [i, j]:=(i-1) N_{y}+j \quad \text {for } i=1, \ldots, N_{x},~ j=1, \ldots, N_{y},
    $$
    indices for periodic boundary conditions:
    $$
    i^{+}=\left\{\begin{array}{l}
    	i+1~ \text{ for } i=1,\ldots , N_x-1,\\
    	1~~~~~ \text{ for }  i=N_x,
    \end{array}\right. \qquad
     i^{-}=\left\{\begin{array}{l}
    	i-1~ \text{ for } i=2,\ldots , N_x,\\
    	N_x~~~~ \text{ for }  i=1,
    \end{array}\right.
    $$

    $$
    j^{+}=\left\{\begin{array}{l}
    	j+1~ \text{ for } j=1,\ldots , N_y-1,\\
    	1~~~~~ \text{ for }  j=N_y,
    \end{array}\right. \qquad
     j^{-}=\left\{\begin{array}{l}
    	j-1~ \text{ for } j=2,\ldots , N_y,\\
    	N_y~~~~ \text{ for }  j=1,
    \end{array}\right.
    $$
    and discrete operators
    $$
    \mathcal{H}^{\ell, \pm}_{x} D^n_{i,j}=\kappa B\left( \pm dg_{i\pm 1/2,j}^{\ell,n} \right) \quad \text{and} \quad
    \mathcal{H}^{\ell, \pm}_{y} D^n_{i,j}=\kappa B\left( \pm dg_{i,j\pm 1/2}^{\ell,n} \right).
    $$
We study the elements of $\mathcal{L}$ by looking into each column. The non-zero entries of the $k$th $(k=[i,j])$ column of $\mathcal{L}$ are given by
	\begin{equation}
	\mathcal{L}_{m, k}=\left\{
		\begin{aligned}
			&-\frac{\Delta t}{\Delta x^2} \mathcal{H}^{\ell, -}_{x} D^n_{i,j}, \quad m=[i^{-}, j], \\
			&-\frac{\Delta t}{\Delta y^2} \mathcal{H}^{\ell, -}_{y} D^n_{i,j}, \quad m=[i, j^{-}], \\
			&1+\left[\frac{\Delta t}{\Delta x^2} \left( \mathcal{H}^{\ell, +}_{x} +\mathcal{H}^{\ell, -}_{x}  \right)
			+\frac{\Delta t}{\Delta y^2} \left( \mathcal{H}^{\ell, +}_{y} +\mathcal{H}^{\ell, -}_{y}  \right)\right] D^n_{i,j},~ m=k, \\
			&-\frac{\Delta t}{\Delta y^2} \mathcal{H}^{\ell, +}_{y} D^n_{i,j}, \quad m=[i, j^{+}], \\
			&-\frac{\Delta t}{\Delta x^2}\mathcal{H}^{\ell, +}_{x} D^n_{i,j}, \quad m=[i^{+}, j].
			\end{aligned}
		\right.\label{matrixl}
	\end{equation}
It is easy to verify that $\mathcal{L}$ has positive diagonal elements and negative off-diagonal elements, and is strictly diagonally dominant according to the column sum $\sum_{m=1}^{N_xN_y} \mathcal{L}_{m, k} =1$.  As a result, $\mathcal{L}$ is an M-matrix with $\mathcal{L}^{-1}\succ 0$. Thus, if $c_{i,j}^{\ell, n}>0,$ then
$$
c_{i,j}^{\ell, n+1}>0, \text { for } i=1, \cdots, N_x, ~j=1, \cdots, N_y.
$$
\end{proof}

Since energy dissipation plays a crucial role in both theoretical analysis and numerical stability, it is highly desirable to maintain this property at the discrete level in numerical simulations. With a second-order accurate quadrature, the electrostatic free energy $\mathcal{F}$ in~\reff{EDc} can be approximated by
\begin{equation}
	\mathcal{F}_h^n=\Delta \Omega \sum_{i=1}^{N_x} \sum_{j=1}^{N_y} \left( \frac{\kappa^2\left|D_{i+\frac{1}{2},j}^n\right|^2}{\varepsilon_{i+\frac{1}{2},j}}+\frac{\kappa^2\left|D_{i,j+\frac{1}{2}}^n\right|^2}{\varepsilon_{i,j+\frac{1}{2}}} \right)
	+ \Delta \Omega \sum_{\ell=1}^M \sum_{i=1}^{N_x} \sum_{j=1}^{N_y}  c^{\ell,n}_{i,j} \left(\log(c^{\ell,n}_{i,j})+\mu^{\ell,\text{cr},n}_{i,j} \right). \label{energydis}
\end{equation}

\begin{thm}\label{thm4} (Energy dissipation)  Let $\varepsilon_{\min}=\min \limits_{i,j} \left\{ \varepsilon_{i+1/2, j}, \varepsilon_{i, j+1/2}\right\}$, $\varepsilon_{\max }=\max \limits_{i,j} \left\{ \varepsilon_{i+1/2,j},  \varepsilon_{i,j+1/2}\right\}$,  and $c_{\max}=\max \limits_{i,j,\ell} \left\{c_{i,j}^{\ell,n+1}\right\}$. Define
\begin{equation}\label{time_cons}
	\Delta t^{*}=\frac{ 2 \kappa\varepsilon_{\min }^{3}}{\varepsilon_{\max }^2  c_{\max} \sum_{\ell=1}^{M} \left|q^\ell \right|^{2}} {\rm exp}\left[-  \max _{i,j,\ell} \left(\left| dg^{\ell,n}_{i+\frac{1}{2},j} \right|,\left| dg^{\ell,n}_{i,j+\frac{1}{2}} \right| \right) \right].
\end{equation}
If $\mu^{\ell,\text{cr}}$ is time independent and $\Delta t \in\left(0, \Delta t^{*}\right)$, then the numerical scheme~\reff{disc} and \reff{D*1}-\reff{D*2} satisfies the discrete energy dissipation
	\begin{equation}
	\mathcal{F}_{h}^{n+1}-\mathcal{F}_{h}^{n} \leq-\frac{\Delta t}{2} I_1, \label{energyinequ}
	\end{equation}
	where
	$$
	\begin{aligned}
	I_1=- \sum_{\ell=1}^M \sum_{i,j} \Delta \Omega &\left[ J_{i+\frac{1}{2},j}^{\ell,n} \left( \frac{\log \left(c_{i+1,j}^{\ell,n+1} \right)-\log \left(c_{i,j}^{\ell,n+1} \right)+\mu^{\ell,\text{cr}}_{i+1,j}-\mu^{\ell,\text{cr}}_{i,j}}{\Delta x} - \frac{q^\ell D^n_{i+\frac{1}{2},j}}{\varepsilon_{i+\frac{1}{2},j}} \right) \right.\\
	&\left. + J_{i,j+\frac{1}{2}}^{\ell,n} \left( \frac{\log \left(c_{i,j+1}^{\ell,n+1} \right)-\log \left(c_{i,j}^{\ell,n+1} \right)+\mu^{\ell,\text{cr}}_{i,j+1}-\mu^{\ell,\text{cr}}_{i,j}}{\Delta y} - \frac{q^\ell D^n_{i,j+\frac{1}{2}}}{\varepsilon_{i,j+\frac{1}{2}}} \right) \right] \geq 0.
    \end{aligned}
	$$
\end{thm}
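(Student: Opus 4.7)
The plan is to bound $\mathcal{F}_h^{n+1}-\mathcal{F}_h^n$ via three moves: absorb the local curl-free relaxation step into an inequality, isolate the leading dissipation $-\Delta t\, I_1$ via convexity and discrete summation by parts, and finally absorb the quadratic remainder using the time-step restriction. Since the local algorithm in Section~\ref{localsec} is designed precisely as a descent method for $\mathcal{F}_{\rm pot}^h$ with the concentrations held fixed, one has $\mathcal{F}_{\rm pot}^h[\bm{D}^{n+1}]\leq \mathcal{F}_{\rm pot}^h[\bm{D}^*]$, so it suffices to control $\mathcal{F}_h(\bm{D}^*, c^{\ell,n+1}) - \mathcal{F}_h(\bm{D}^n, c^{\ell,n})$.

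For the electric contribution I would use the identity $|D^*|^2-|D^n|^2 = 2D^n(D^*-D^n)+(D^*-D^n)^2$, substitute the Maxwell--Amp\`ere update \reff{D*1}--\reff{D*2} into the linear cross term, and apply discrete summation by parts. Because $\bm{D}^n$ is discretely curl-free (so $\bm{D}^n/\varepsilon = -\nabla_h \phi^n$ for some discrete potential) and $\bm{\Theta}^n$ is divergence-free, the $\bm{\Theta}^n$ piece drops out and one obtains a term of the form $-\Delta t\,\Delta\Omega \sum_{\ell,i,j} q^\ell J^{\ell,n}\cdot(\bm{D}^n/\varepsilon)$ plus the quadratic remainder $\Delta\Omega\kappa^2\sum (D^*-D^n)^2/\varepsilon$. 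For the entropy and chemical-potential contributions, convexity of $x\log x$ gives $c^{\ell,n+1}\log c^{\ell,n+1}-c^{\ell,n}\log c^{\ell,n}\leq (1+\log c^{\ell,n+1})(c^{\ell,n+1}-c^{\ell,n})$, and since $\mu^{\ell,\rm cr}$ is time-independent the $\mu^{\ell,\rm cr}$ contribution is exactly linear. Substituting the NP scheme \reff{disc} and summing by parts converts this to $\Delta t\, J^{\ell,n}\cdot\nabla_h(\log c^{\ell,n+1}+\mu^{\ell,\rm cr})$. Combining the two pieces exactly reproduces $-\Delta t\, I_1$ modulo the quadratic electric remainder.

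The sign $I_1\geq 0$ is the easy part. Each summand has the form $-J^{\ell,n}_{i+\hf,j}\cdot\Delta x^{-1}\bigl[\log(e^g c^{\ell,n+1})_{i+1,j} - \log(e^g c^{\ell,n+1})_{i,j}\bigr]$ with $g=q^\ell\phi^n+\mu^{\ell,\rm cr}$. Rewriting the Scharfetter--Gummel flux in Slotboom form as in \reff{Slotboom}, each summand is proportional to $\kappa\, e^{-g_{i+\hf,j}}(a-b)(\log a-\log b)\geq 0$, where $a,b>0$ are the Slotboom variables at adjacent nodes, strictly positive by Theorem~\ref{thm2}. The same derivation applies in the $y$-direction.

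The main obstacle is showing that the quadratic remainder $\Delta\Omega\kappa^2\sum(D^*-D^n)^2/\varepsilon$ can be absorbed into $\tfrac{\Delta t}{2}I_1$. I would substitute the Maxwell--Amp\`ere update to write $D^*-D^n$ in terms of $\Delta t\sum q^\ell J^{\ell,n}/(2\kappa^2)$ (the $\Delta t \bm{\Theta}^n$ piece again cancels after summation by parts against $\bm{D}^n/\varepsilon$), which gives a bound of order $\Delta t^2 \varepsilon_{\max}^2 \sum_\ell |q^\ell|^2 \cdot (\text{flux}^2)/(\kappa^2\varepsilon_{\min}^3)$. To compare this with $I_1$, the key technical step is a pointwise estimate on the Bernoulli function of the form $B(z)\leq \exp(-\min(z,0))$ so that each $|J^{\ell,n}|^2$ is dominated by $\kappa c_{\max}\exp(\max|dg^{\ell,n}|)$ times the corresponding summand in $I_1$. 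Requiring the resulting prefactor to be at most $\tfrac{1}{2}$ then yields exactly the time-step restriction \reff{time_cons}, with its precise dependence on $\varepsilon_{\max}^2/\varepsilon_{\min}^3$, $c_{\max}\sum|q^\ell|^2$, and $\exp(-\max|dg|)$. Making the Bernoulli-function estimate tight enough to recover the stated form of $\Delta t^*$ rather than a cruder constant is the delicate part of the argument.
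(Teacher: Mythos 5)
Your overall architecture is close to the paper's: the convexity inequality for the entropy (equivalently $\log x\le x-1$ plus mass conservation), summation by parts to produce $-\Delta t\,I_1$, the Slotboom/entropic-mean identity to get $I_1\ge 0$, and a mean-value-type exponential bound to convert the quadratic flux sum back into $I_1$ and extract $\Delta t^*$ — all of these match the actual proof. However, there is a genuine gap in your treatment of the quadratic electric remainder. You propose to substitute the Maxwell--Amp\`ere update into $D^*-D^n$ and assert that ``the $\Delta t\,\bm{\Theta}^n$ piece again cancels after summation by parts against $\bm{D}^n/\varepsilon$.'' That cancellation is only available for the \emph{linear} cross term $2D^n(D^*-D^n)/\varepsilon$, where the divergence-free $\bm{\Theta}^n$ pairs against the gradient field $\bm{D}^n/\varepsilon=-\nabla_h\phi^n$. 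In the quadratic term $\sum (D^*-D^n)^2/\varepsilon = \Delta t^2\sum\bigl|-\sum_\ell q^\ell J^{\ell,n}/(2\kappa^2)+\bm{\Theta}^n\bigr|^2/\varepsilon$ there is no gradient factor to integrate against, so the $|\bm{\Theta}^n|^2$ and $\bm{\Theta}^n\!\cdot J$ contributions survive; with the choice \reff{Theta2} there is no reason they are controlled by $\Delta t^2\sum |J^{\ell,n}|^2$. Your claimed bound of order $\Delta t^2\varepsilon_{\max}^2\sum_\ell|q^\ell|^2(\text{flux}^2)/(\kappa^2\varepsilon_{\min}^3)$ therefore does not follow from the argument as written.

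The paper circumvents this by never expressing $D^{n+1}-D^n$ through the Amp\`ere update at all. Instead it uses only the fact that both $\bm{D}^{n+1}$ and $\bm{D}^n$ satisfy the discrete Gauss law \reff{gausslaw} and are discretely curl-free (hence admit potentials via \reff{dphi}): subtracting the two Gauss laws, multiplying by $\phi^{n+1}_{i,j}-\phi^n_{i,j}$, and summing by parts yields the identity \reff{Dphi}, whose left side is exactly $2\kappa^2\sum|D^{n+1}-D^n|^2/\varepsilon$ and whose right side, after inserting the NP scheme \reff{disc} and summing by parts once more, pairs $J^{\ell,n}$ against $(D^{n+1}-D^n)/\varepsilon$; Cauchy--Schwarz then gives $\sum|D^{n+1}-D^n|^2\le \frac{\Delta t^2\varepsilon_{\max}^2}{4\kappa^4\varepsilon_{\min}^2}\sum_\ell|q^\ell|^2\sum_\ell\sum_{i,j}\Delta\Omega\,|J^{\ell,n}|^2$ with $\bm{\Theta}^n$ never appearing. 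Note also that this duality step needs the curl-free property of $\bm{D}^{n+1}$ (to have $\phi^{n+1}$), which your route through $\bm{D}^*$ cannot supply since $\bm{D}^*$ is precisely the field that fails to be curl-free before relaxation. If you repair the quadratic-remainder step along these lines, the rest of your outline (including the Bernoulli-function bound $B(z)\le e^{\max(-z,0)}$, which is equivalent to the paper's entropic-mean computation) goes through and recovers the stated $\Delta t^*$.
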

The theorem can be proved by following the idea presented in the work~\cite{LM2020}, with modifications on the treatment of the electric displacement, in place of the electric potential, due to the local relaxation update and entropic mean approximation in the numerical fluxes. See the proof given in the Appendix.

\begin{rem}
The condition \eqref{time_cons} implies an assumption on the uniform discrete $L^\infty$ boundness of $c_{i,j}^{\ell,n+1}$. Although this assumption can be validated by the convergence analysis based on the higher order asymptotic expansion~\cite{Ding2022con,LiuChun2020positivity},  whether they could be applied on the MANP model is still open. 
Fortunately, in practical simulations, the computed solutions are always bounded, so our theoretical results are still valid in this version.
\end{rem}

\section{Numerical Results}\label{s:NumRes}
To demonstrate the effectiveness of the proposed scheme, we conduct a series of numerical tests on accuracy, as well as their performance in preserving solution positivity, mass conservation, and energy dissipation at the discrete level.
\subsection{Accuracy of the algorithm}\label{accuracy}
In this example, we consider charge dynamics of binary electrolytes in a rescaled domain $\Omega=[-1, 1]\times [-1, 1]$ with periodic boundary conditions.  Simulations are performed by numerically solving the MANP equations
\[
	\left\{
	\begin{aligned}
		&\partial_{t} c^\ell=-\nabla \cdot \bm{J}^\ell, ~\ell=1,2, \\
		&\bm{J}^\ell=- \kappa\left( \nabla c^\ell-q^\ell c^\ell \bm{D}/\ve+\bm{g}^\ell \right), ~\ell=1,2,\\
		&\partial_{t} \bm{D}=\left(-\bm{J}^1+\bm{J}^2+\bm{S} \right)/\left( 2\kappa^2\right)+\bm{\Theta},\\
		&\nabla \times \bm{D}/\ve=0, \\
		&\nabla \cdot \bm{\Theta}=0,
	\end{aligned}
	\right.
\]
with the dielectric coefficient $\varepsilon=0.5$, valences $q^\ell=(-1)^{\ell+1}$, and $\kappa=1$.
The source terms, as well as initial conditions, are determined by the following exact solution
\[
	\left\{
	\begin{aligned}
	&c^\ell(x, y, t)=\pi^{2} e^{-t} \cos (\pi x) \cos (\pi y) / 5+2, ~\ell=1,2, \\
	&\bm{D}(x, y)=\left(\begin{aligned}\pi e^{-t} \sin (\pi x) \cos (\pi y)/2
		\\ \pi e^{-t} \cos (\pi x) \sin (\pi y)/2
				\end{aligned}\right).
	\end{aligned}
	\right.
\]
Note that $\bm{g}^\ell$ and $\bm{S}$ are not unique and only one specific set of periodic solutions is needed. We choose
\[
	\left\{
	\begin{aligned}
        & \bm{g}^1=\left( \begin{array}{c}
        	     (\frac{2\pi^3}{5}+4\pi-\frac{\pi}{5})e^{-t} \sin(\pi x)\cos(\pi y) +\frac{\pi^3}{10} e^{-2t} \sin(2\pi x)\cos^2(\pi y)\\
        	     \frac{\pi^3}{10}e^{-2t}\cos^2(\pi x) \sin(2\pi y)
        				\end{array}
				\right),\\
	& \bm{g}^2=\left( \begin{array}{c}
		(\frac{2\pi^3}{5}-4\pi-\frac{\pi}{5})e^{-t} \sin(\pi x)\cos(\pi y) -\frac{\pi^3}{10} e^{-2t} \sin(2\pi x)\cos^2(\pi y)\\
		-\frac{\pi^3}{10}e^{-2t}\cos^2(\pi x) \sin(2\pi y)
				\end{array}
				\right),\\
	& \bm{S}=\left( \begin{array}{c}
	    	-2\pi e^{-t} \sin(\pi x)\cos(\pi y)\\
			0
			\end{array}
			\right).
	\end{aligned}
	\right.
\]

To verify the accuracy of the semi-implicit scheme~\reff{disc} and \reff{D*1}-\reff{D*2}, we perform numerical simulations on a uniform mesh with grid sizes $\Delta x=\Delta y=h$. Numerical errors on various refined meshes are obtained by comparing the numerical solutions against the exact solution.

\begin{table}[H]
	\centering
	\setlength{\abovecaptionskip}{10pt}%
	\setlength{\belowcaptionskip}{0pt}%
	\begin{tabular}{ccccc}	
		\hline \hline
		$h$  & $c^1$  & Order  & $c^2$    & Order  \\
		\hline
		0.1   & 1.6211E-2 & -  & 7.4156E-3 & -      \\
		0.05                          & 7.1679E-3 & 1.1773 & 2.7562E-3 & 1.4279  \\
		0.025                         & 3.3534E-3 & 1.0959 & 1.1436E-3 & 1.2691 \\
		0.0125                         & 1.6194E-3 & 1.0501 & 5.1351E-4 & 1.1551  \\
		\hline	\hline
	\end{tabular}
	\caption{The $\ell ^ {\infty}$ error and convergence order for the numerical solutions of $c^1$ and $c^2$ up to $T=1$ with $\Delta t=h/10$ for the semi-implicit scheme~\reff{disc} and ~\reff{D*1}-\reff{D*2}.  {The tolerance $\varepsilon_{\text{tol}}=1$E$-6$.}}
	\label{table-h}	
\end{table}

\begin{table}[H]
	\centering
	\setlength{\abovecaptionskip}{10pt}%
	\setlength{\belowcaptionskip}{0pt}%
	\begin{tabular}{ccccc}
		\hline \hline
		$h$  & $c^1$  & Order  & $c^2$    & Order   \\
		\hline
		0.1                              & 1.6211E-2 & -  & 7.4156E-3 & -    \\
		0.05                             & 4.0353E-3 & 2.0062 & 1.8320E-3     & 2.0171 \\
		0.025                            & 1.0077E-3 & 2.0015 & 4.5664E-4 & 2.0043 \\
		0.0125                          & 2.5187E-4 & 2.0004 & 1.1408E-4 & 2.0011  \\
		\hline \hline
	\end{tabular}
   \caption{The $\ell ^ {\infty}$ error and convergence order for the numerical solutions of $c^1$ and $c^2$ up to $T=1$ with $\Delta t=h^2$ for the semi-implicit scheme~\reff{disc} and ~\reff{D*1}-\reff{D*2}. {The tolerance $\varepsilon_{\text{tol}}=1$E$-6$ for $h\geq 0.025$ and $\varepsilon_{\text{tol}}=1$E$-7$ for $h=0.0125$.}}
	\label{table-h2}	
\end{table}

As shown in Table~\ref{table-h}, the numerical error decreases robustly as the grid size refines with $\Delta t=h/10$ and the numerical convergence order is about one. From Table~\ref{table-h2}, one can see that the numerical error decreases much faster with $\Delta t=h^2$ and the numerical convergence order is about two. Such numerical tests clearly demonstrate that the proposed numerical scheme~\reff{disc} and \reff{D*1}-\reff{D*2}, as expected, is first-order accurate in temporal discretization and second-order accurate in spatial discretization. {Note that in order to see the convergence order for refined meshes, the last tolerance $\varepsilon_{\text{tol}}$ for the local curl-free relaxation is $10^{-7}$, which is more stringent than others.}

\subsection{Structure-preserving properties and applications}
In this section, we perform a series of numerical experiments to demonstrate that the proposed numerical scheme~\reff{disc} and \reff{D*1}-\reff{D*2}, combined with the local curl-free algorithm, can preserve solution positivity, mass conservation, and energy dissipation of the MANP equations at the discrete level. 
In numerical tests, we consider the charge dynamics of binary monovalent electrolytes governed by the same equations as in Section \ref{accuracy}, but with the fixed charge distribution given by
\[
\rho^{f}={\bf 1}_{\{ 0.24 \leq r^2\leq0.26, ~0<\theta\leq\pi\}} - {\bf 1}_{\{ 0.24 \leq r^2\leq0.26, ~\pi < \theta\leq2\pi\}},
\]
which is expressed in the polar coordinate. Such a setting describes the charge dynamics around a Janus sphere with permanent positive and negative charges on the upper and lower hemispheres, respectively. We consider a variable dielectric coefficient given by
\[
\varepsilon(x, y)=
	\frac{\varepsilon_w-\varepsilon_m}{2}\left[\tanh \left(50 \sqrt{x^{2}+y^{2}}-25\right)+1\right]+\varepsilon_m.
\]
The initial conditions are given by
$$
c^\ell(0,x,y)=0.1,~~\ell=1,2.
$$
To consider steric effects and Born solvation interactions, we take the correlation chemical potential as
$$\mu^{\ell,\text{cr}}= -\frac{v^{\ell}}{v^0} \log(v^0c^0)+ \chi \frac{(q^\ell)^2}{a^\ell}  \left(\frac{1}{\varepsilon} -1\right),$$ and parameters  $v^1=0.716^3$, $v^2=0.676^3$, $v^0=0.275^3$, and $\chi=198.9437$. {The tolerance in the local curl-free relaxation is set as $\varepsilon_{\text{tol}}=10^{-5}$.}

\begin{figure}[ht]
	\centering
	\includegraphics[scale=0.45]{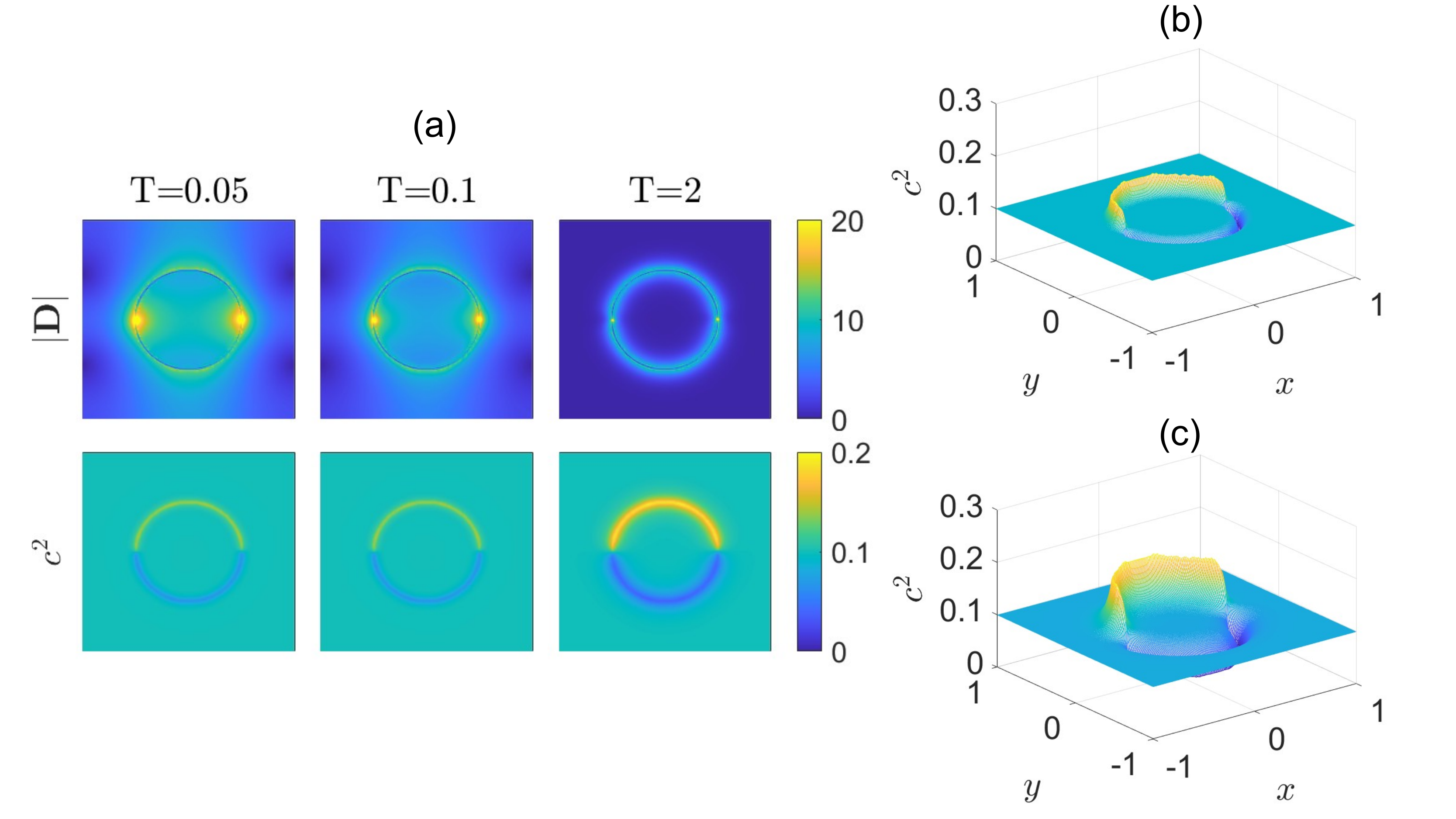}
	\caption{(a) The snapshots of $|\bm{D}|$ and $c^2$ at time $T=0.05$, $T=0.1$, and $T=2$ with $\kappa=0.02$ and $\varepsilon_m=\varepsilon_w=1$; (b) Plot of $c^2$ at time $T=0.05$; (c) Plot of $c^2$ at time $T=2$. }
	\label{f:vem=vew}
\end{figure}

In the first case, we consider a spatially uniform dielectric coefficient with $\varepsilon_m= \varepsilon_w =1$ and $\kappa=0.02$. The numerical simulations are performed with $h=0.01$ and $\Delta t= h^2$. Fig.~\ref{f:vem=vew} (a) displays the snapshots of $|\bm{D}|$ and concentrations of $c^2$ at time $T=0.05$, $T=0.1$, and $T=2$.  One can observe from Fig.~\ref{f:vem=vew} (b) and (c) that ions under electrostatic interactions gradually accumulate at the fixed charges with opposite signs in a very thin ring region, because the relatively small coefficient $\kappa$ gives rise to the boundary layers of ions near the fixed charges.  At the same time, the magnitude of the electric displacement first develops two peaks at positions where fixed charges undergo sharp change of signs. As counterions approach, the magnitude gets screened quickly as time evolves.

\begin{figure}[htbb]
	\centering
	\includegraphics[scale=0.45]{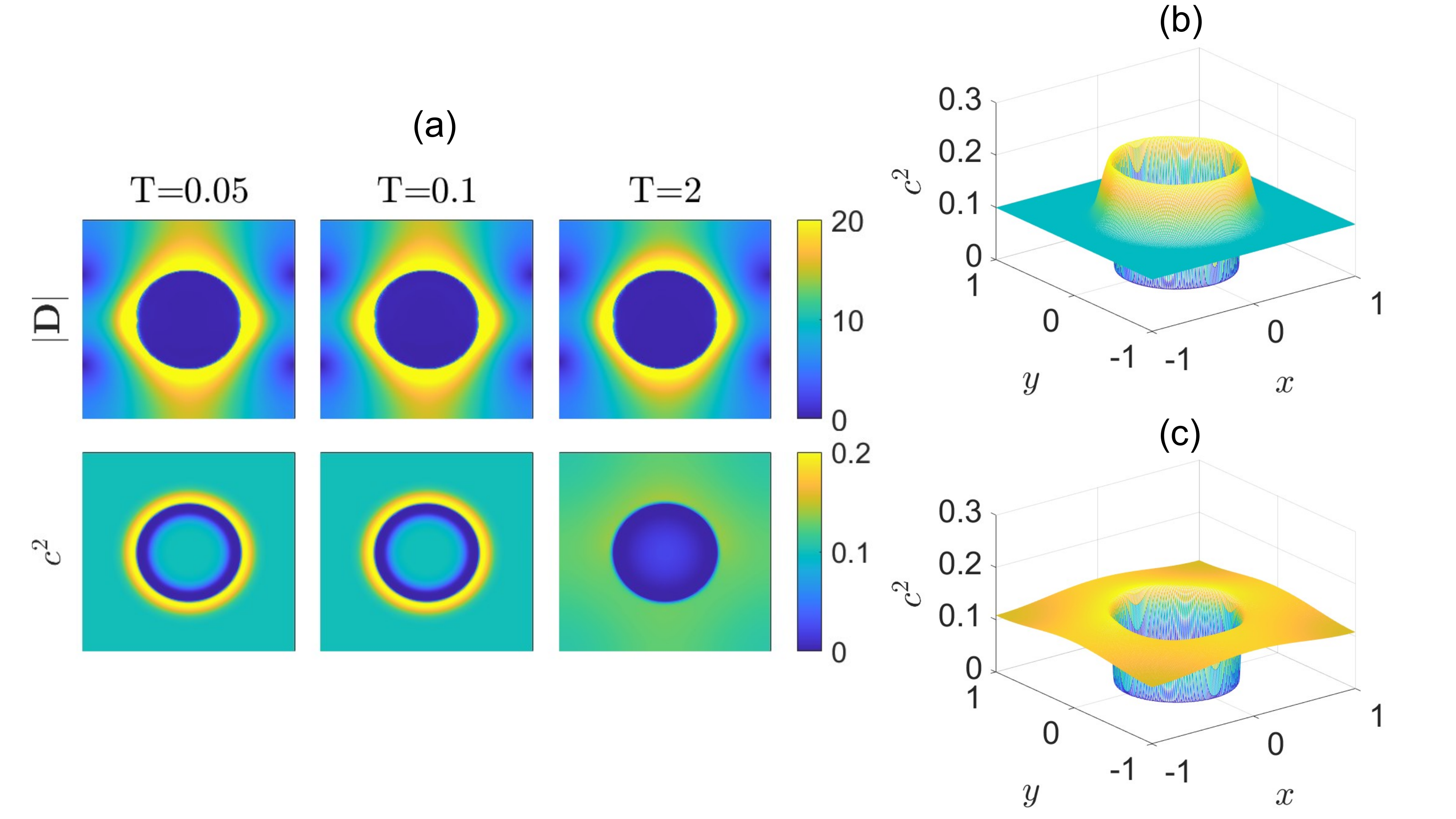}
	\caption{(a) The snapshots of $|\bm{D}|$ and $c^2$ at time $T=0.05$, $T=0.1$, and $T=2$ with $\kappa=0.02$, $\varepsilon_m=1$ and  $\varepsilon_w=78$; (b) Plot of $c^2$ at time $T=0.05$; (c) Plot of $c^2$ at time $T=2$. }
	\label{f:k001}
\end{figure}

\begin{figure}[htbb]
	\centering
	\includegraphics[scale=0.45]{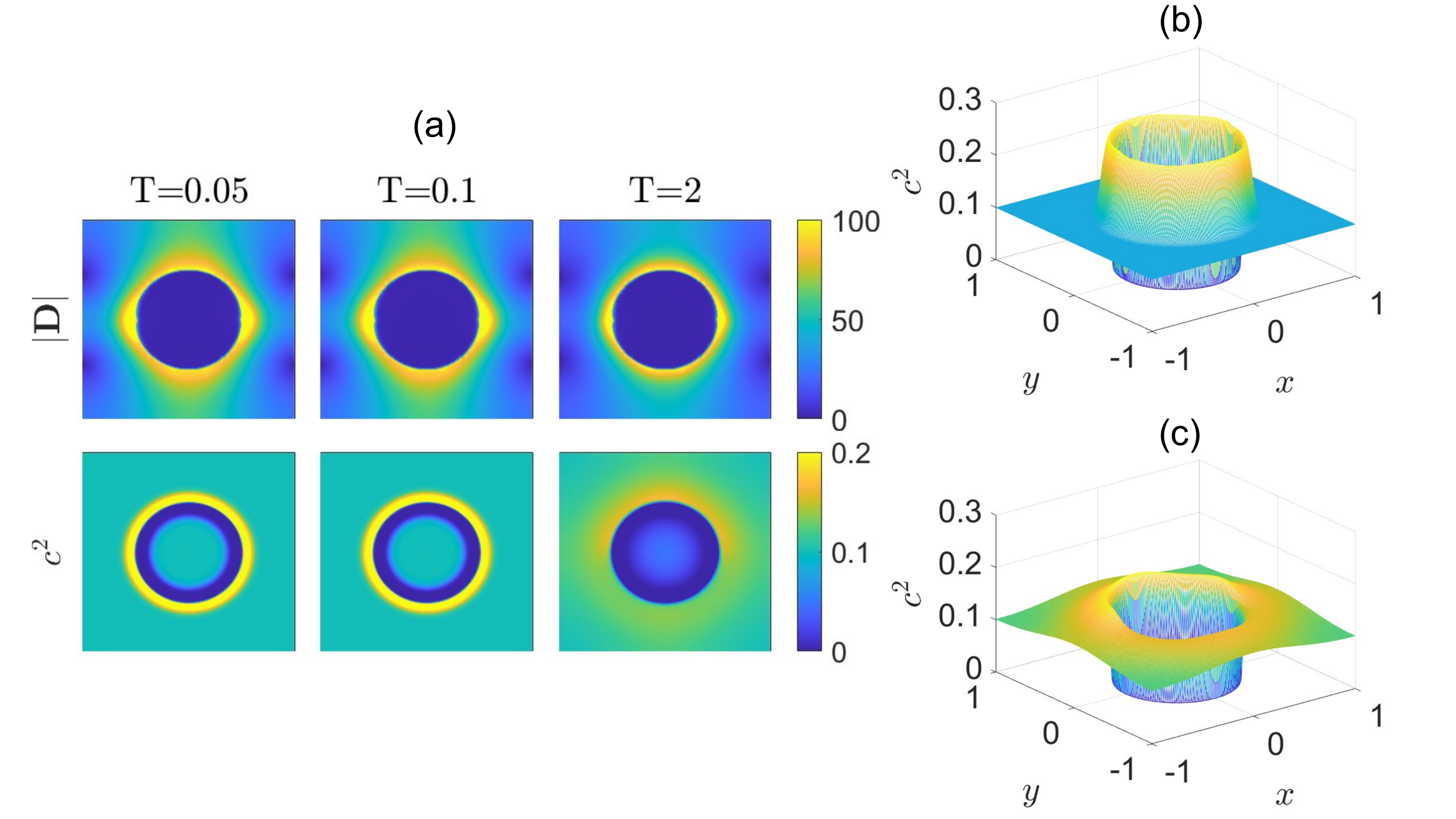}
	\caption{(a) The snapshots of $|\bm{D}|$ and $c^2$ at time $T=0.05$, $T=0.1$, and $T=2$ with $\kappa=0.01$, $\varepsilon_m=1$ and  $\varepsilon_w=78$; (b) Plot of $c^2$ at time $T=0.05$; (c) Plot of $c^2$ at time $T=2$. }
	\label{f:k005}
\end{figure}

In the second case, we perform numerical simulations using variable dielectric coefficients with $\varepsilon_m=1$ and $\varepsilon_w=78$, which mimic dielectric contrast between Janus sphere and water.  In contrast to the previous case, the dielectric mismatch causes large convection via the Born solvation interactions.  From Fig.~\ref{f:k001}, one can observe that Born solvation interactions dominate over other effects in the ion dynamics, resulting in a completely different pattern of ionic distributions. Due to dielectric mismatch, Born solvation interactions drive both cations and anions from low dielectric region to high dielectric region, quickly forming ring structure around fixed charges in the first phase of ion dynamics. After that, the ring structure eventually diffuses over the whole region of high dielectrics. Meanwhile, the electric displacement slightly gets screened by the mobile ions.



In the third case, to evaluate the effect of $\kappa$, we investigate the ionic dynamics with a smaller $\kappa=0.01$. As shown in Fig.~\ref{f:k005}, both cations and anions again are quickly expelled from a low-dielectric region to a high-dielectric region, indicating that the Born solvation interactions still dominate over other effects in the first phase. As the system approaches the steady state, the counterions are still attracted around the oppositely charged hemisphere of the Janus sphere, resulting from the competitions among various effects.  Comparing with the previous case, one can observe that the smaller $\kappa$ leads to a sharper boundary-layer structure near charged surface.

Large convection has been a challenging numerical difficulty to deal with in the study of ion dynamics. There are several factors could contribute to the large convection in the system. For instance, when the dimensionless coefficient $\kappa$, the ratio of the Debye length to spatial characteristic length, is small, the emergence of boundary layers could give rise to large electric displacement in the MANP equations, as shown in the third case.  In addition, the steric effects and Born solvation interactions could cause large convection in regions where there is a dielectric mismatch or concentrated ionic solution. These could happen in the vicinity of the fixed charges in the system under consideration.  To demonstrate the effectiveness of the numerical scheme, we characterize the convection quantitatively by the numerical cell P\'{e}clet number, which is a dimensionless number defined by
$$
{\mbox Pe}^{\ell, n}_{i,j}=\max \left\{|dg^{\ell,n}_{i+\frac{1}{2},j}|, ~|dg^{\ell,n}_{i,j+\frac{1}{2}}|\right\}.
$$

\begin{figure}[htbp]
	\centering
	\includegraphics[scale=0.8]{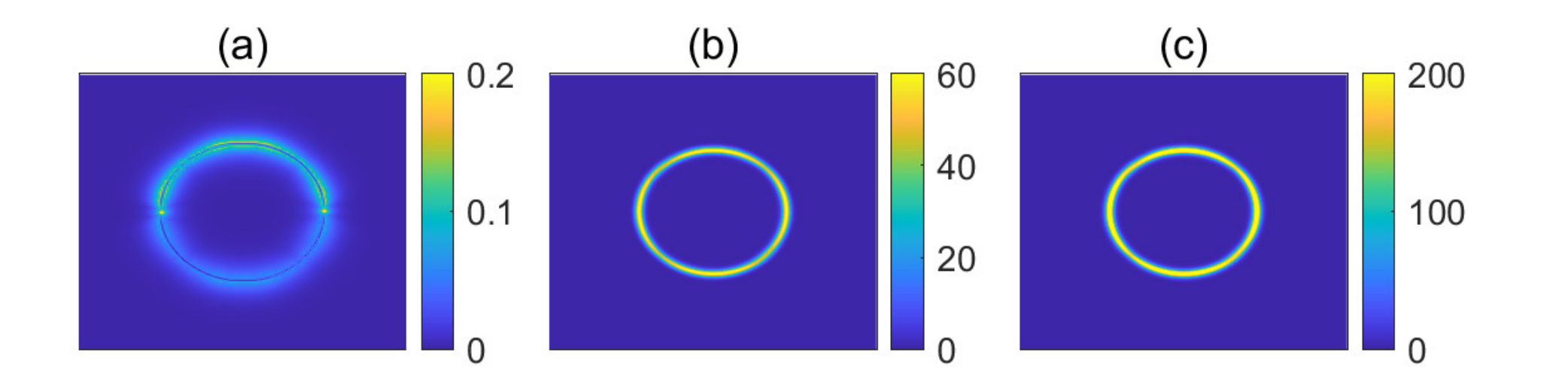}
	\caption{The distribution of the numerical cell P\'{e}clet number. (a) $\ve_m=\ve_w=1$ and $\kappa=0.02$; (b) $\ve_m=1$, $\ve_w=78$ and $\kappa=0.02$;
	(c) $\ve_m=1$, $\ve_w=78$ and $\kappa=0.01$.}
	\label{f:Pe}
\end{figure}

Fig.~\ref{f:Pe} plots the distribution of the numerical cell P\'{e}clet number for previous three cases. Clearly, one can see that the high cell P\'{e}clet number mainly concentrates around the fixed charges. Asymmetry between upper and lower halves is ascribed to the different ionic volumes and signs of fixed charges.  Dielectric mismatch, which accounts for the Born solvation interactions, contributes significantly to the convection. Small $\kappa$, associated with the boundary-layer structure, is another source of the large convection. Stable numerical results presented in previous three cases demonstrate that the developed numerical scheme can robustly handle large convection of high numerical cell P\'{e}clet numbers.


\begin{figure}[h!]
	\centering
	\includegraphics[scale=0.65]{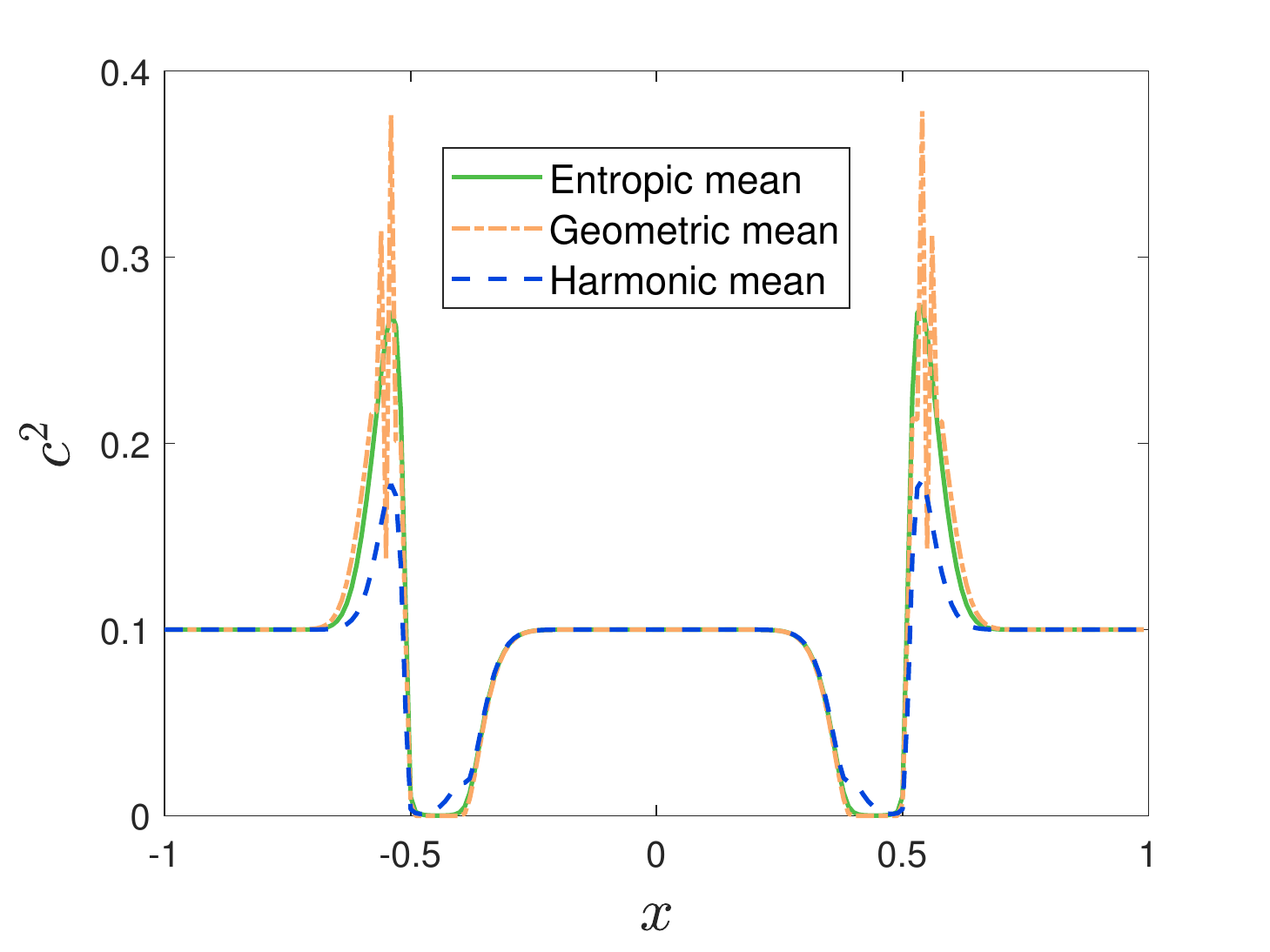}
	\caption{{Cross-section snapshots at $y = 0$ of anion concentrations with different means at time $T=0.05$. Note that the results with the standard central difference and arithmetic mean are not displayed due to the presence of non-positive numerical values for $c^{\ell}$ or $c^0$. }}
	\label{f:means}
\end{figure}

{
In addition, we perform numerical tests to compare the performance of the developed scheme based on the entropic mean, the standard central differencing scheme, and schemes obtained by using other mean approximations mentioned in Remark~\ref{Means} with $h=0.01$ and $\Delta t=0.001$. We apply the schemes to numerically simulate the tough case with $\varepsilon_m=1$, $\varepsilon_w=78$ and $\kappa=0.01$, which has large convection as demonstrated in Fig.~\ref{f:Pe}.  We find that the simulations with standard central differencing scheme and the arithmetic-mean scheme both fail for the presence of non-positive numerical concentrations. From Fig.~\ref{f:means}, one can see that the oscillations emerge around the interface for the geometric-mean scheme. The one predicted by the harmonic mean fails to capture the concentration variations at the interface. Such results demonstrate that the developed numerical scheme with entropic mean approximation can robustly handle large convection in charge transport. 
}

\begin{figure}[htbb]
	\centering
	\includegraphics[scale=0.35]{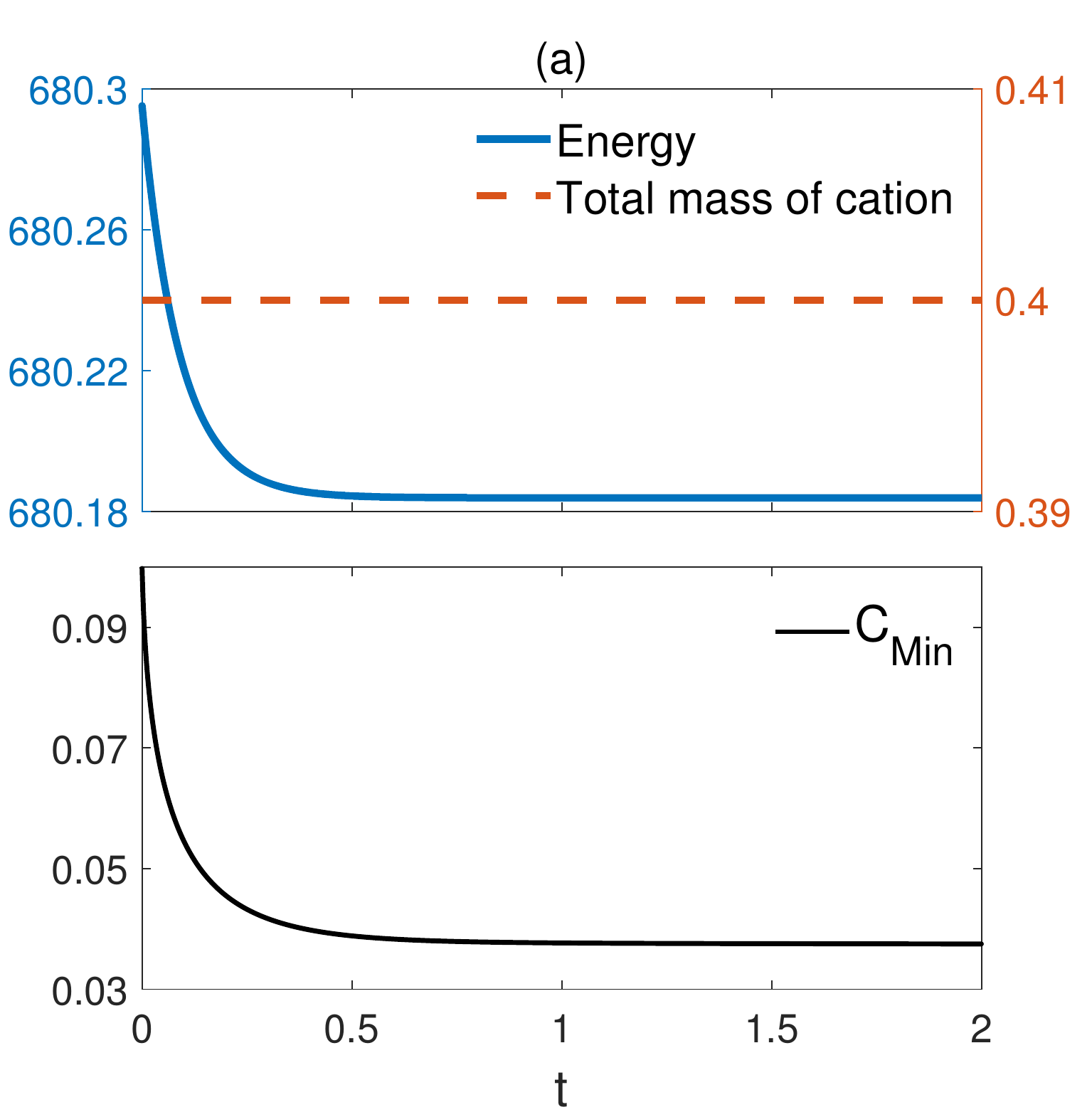} \hspace{-3mm}
	\includegraphics[scale=0.35]{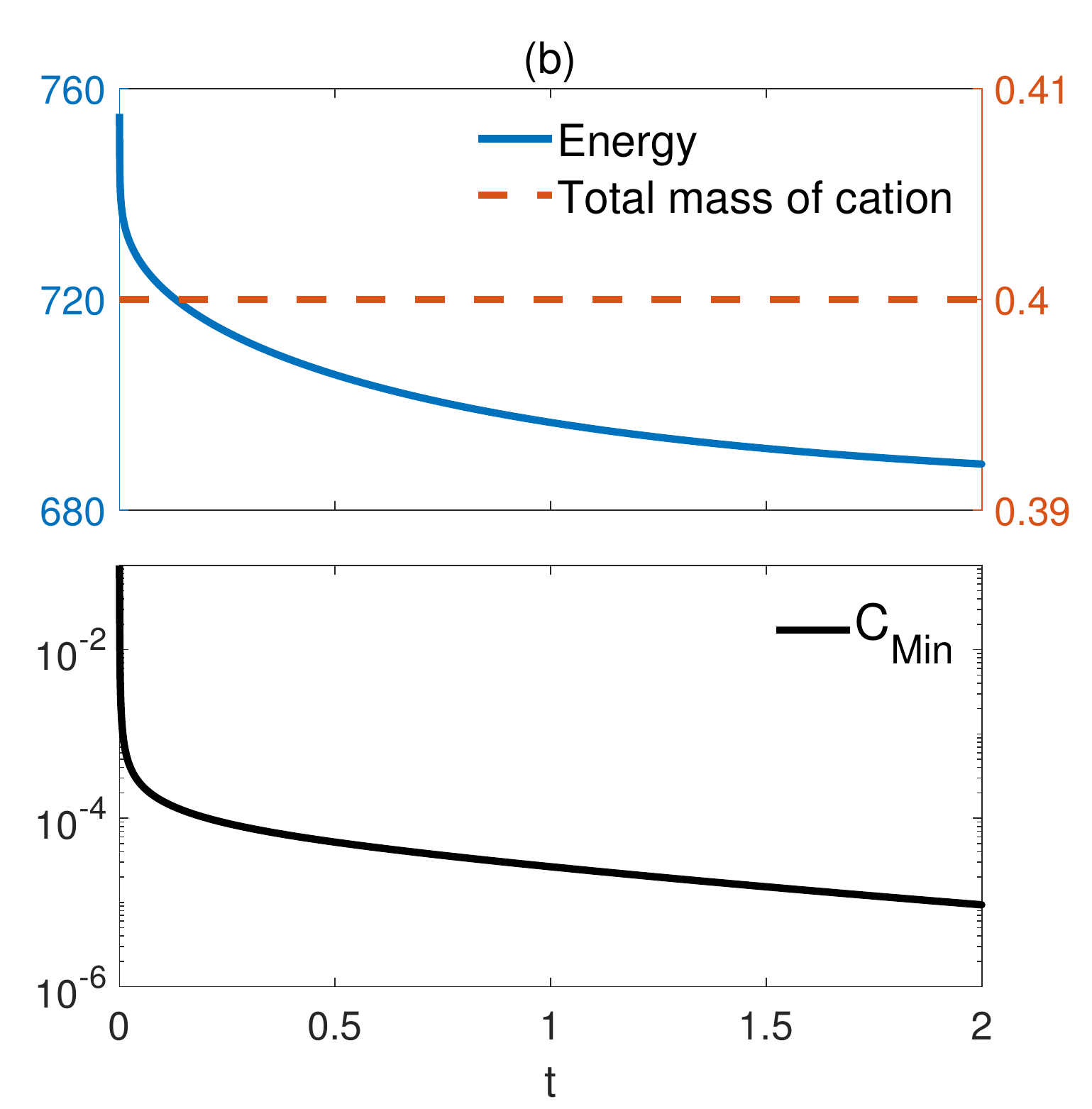} \hspace{-3mm}
	\includegraphics[scale=0.35]{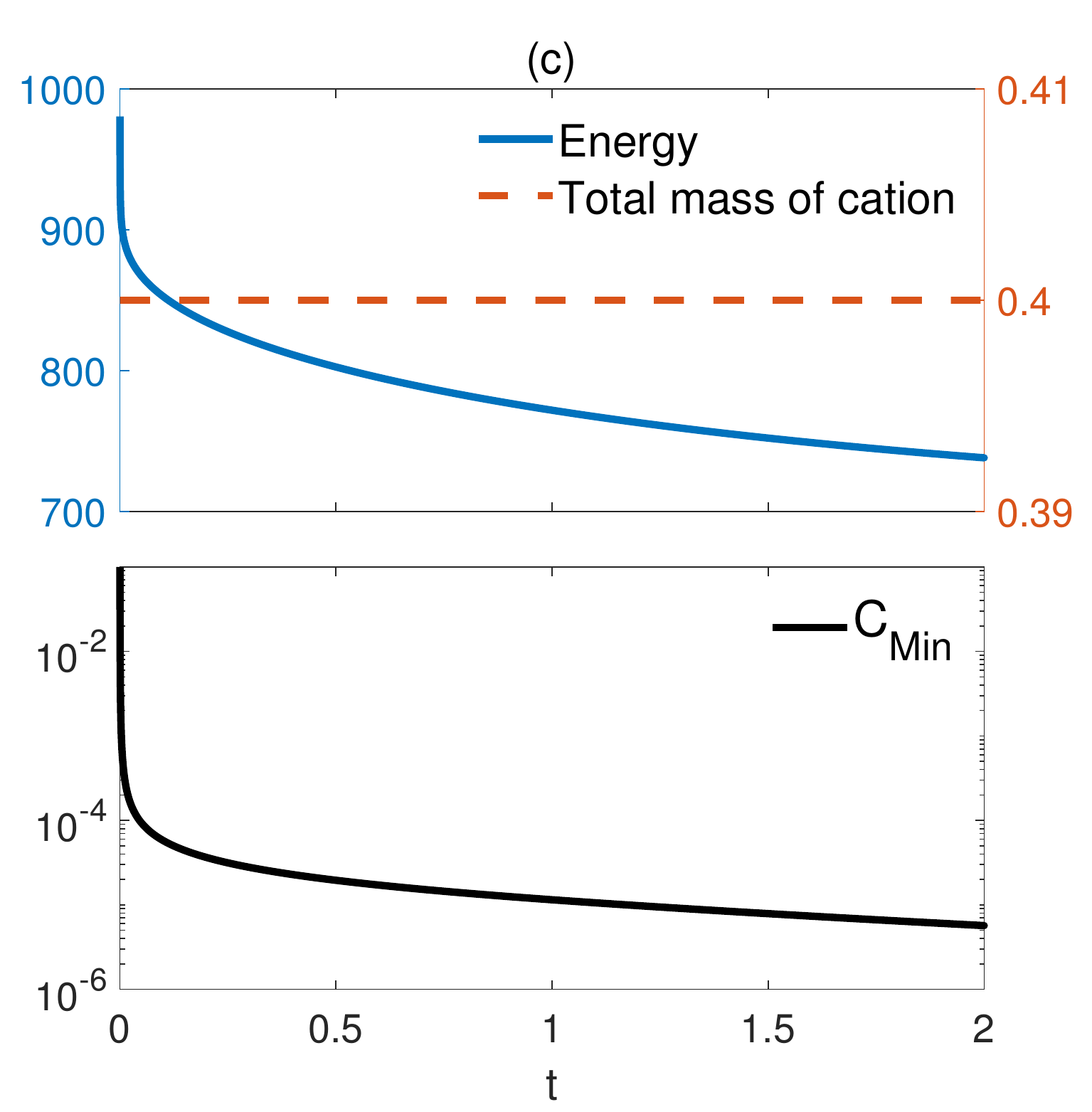}
	\caption{The evolution of total mass, discrete energy $\mathcal{F}_h$, and the minimum concentration on the computational grid. (a) $\ve_m=\ve_w=1$ and $\kappa=0.02$; (b) $\ve_m=1$, $\ve_w=78$ and $\kappa=0.02$;
	(c) $\ve_m=1$, $\ve_w=78$ and $\kappa=0.01$.}
	\label{f:prop}
\end{figure}
{Furthermore}, we perform numerical tests on the performance of the developed numerical scheme in preserving the physical properties at the discrete level. Fig.~\ref{f:prop} depicts the evolution of total mass, discrete energy $\mathcal{F}_h$, and the minimum concentration values on the computational grid, $C_{\rm Min}$, for previous three cases. Clearly, the total mass of concentrations is perfectly conserved during the time evolution.  Of much interest is the evolution of the minimum concentration, which illustrates that the numerical solution of both concentrations {remains} positive all the time. Being consistent with the numerical analysis, the discrete energy $\mathcal{F}_h$ decreases monotonically. From the decay rate, one can observe that the system with dielectric mismatch relaxes in a smaller timescale in the first phase of ion dynamics, but it eventually takes much longer time to reach the steady state. Such numerical results verify that the developed numerical scheme can effectively preserve the physical properties at the discrete level.

\begin{figure}[h!]
	\centering
	\includegraphics[scale=0.6]{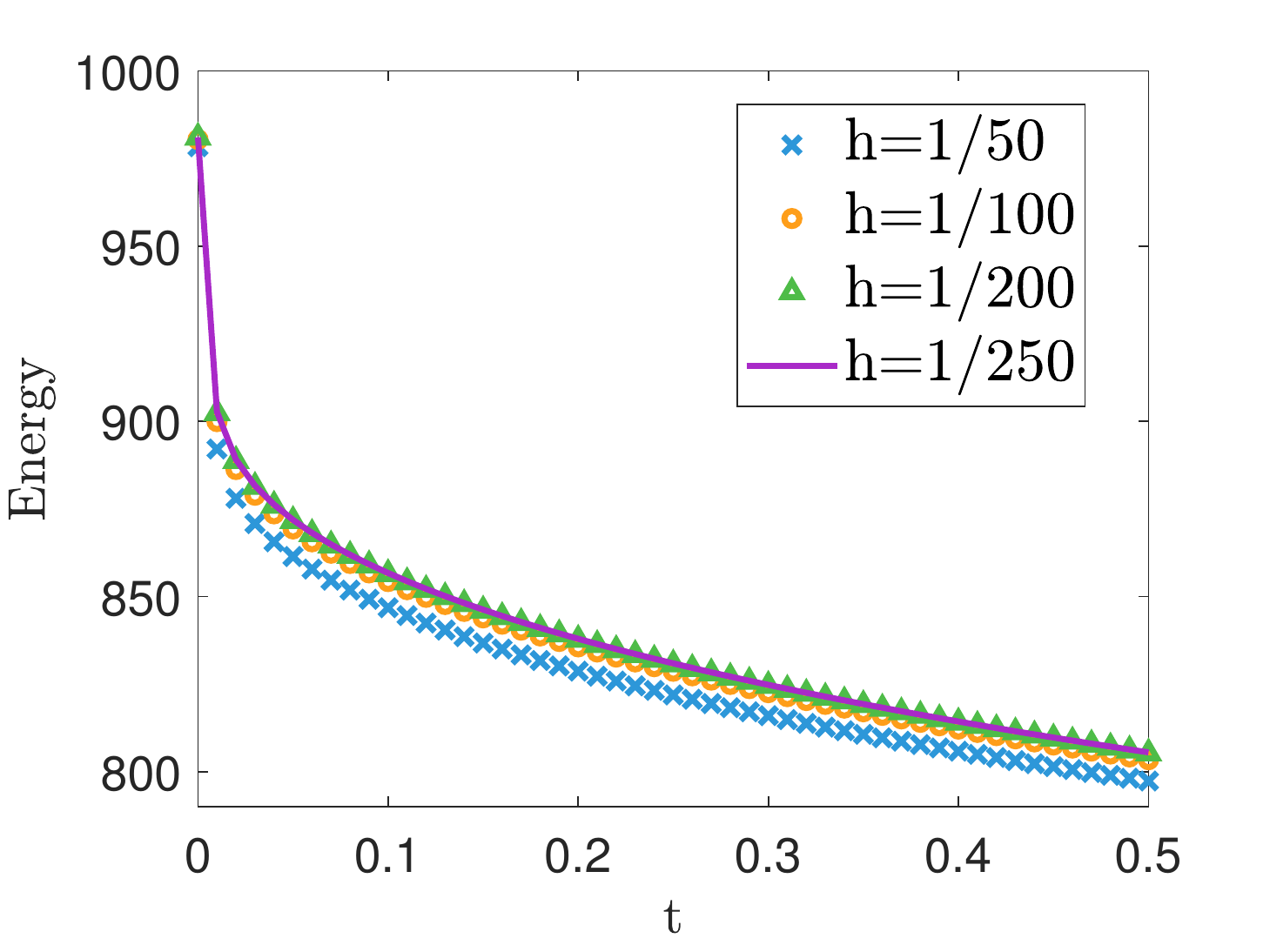}
	\caption{{The evolution of discrete energy $\mathcal{F}_h$ with a fixed time step $\Delta t=0.01$ and different grid spacing with $\ve_m=1$, $\ve_w=78$ and $\kappa=0.01$.}}
	\label{f:Energy}
\end{figure}

{As revealed in Theorem 3.3, there is a sufficient time-step constraint $\Delta t <\Delta t^*$ for the discrete energy dissipation. To further explore the time-step constraint,  we perform a series of numerical simulations with a fixed time step $\Delta t=0.01$ and refine the grid spacing from $h=1/50$ to $h=1/250$. As shown in Fig.~\ref{f:Energy}, the discrete energy keeps decreasing monotonically with various mesh resolution. This demonstrates that the constraint on time step is insensitive to the spatial resolution.   
}

\begin{figure}[h!]
	\centering
	\includegraphics[scale=0.6]{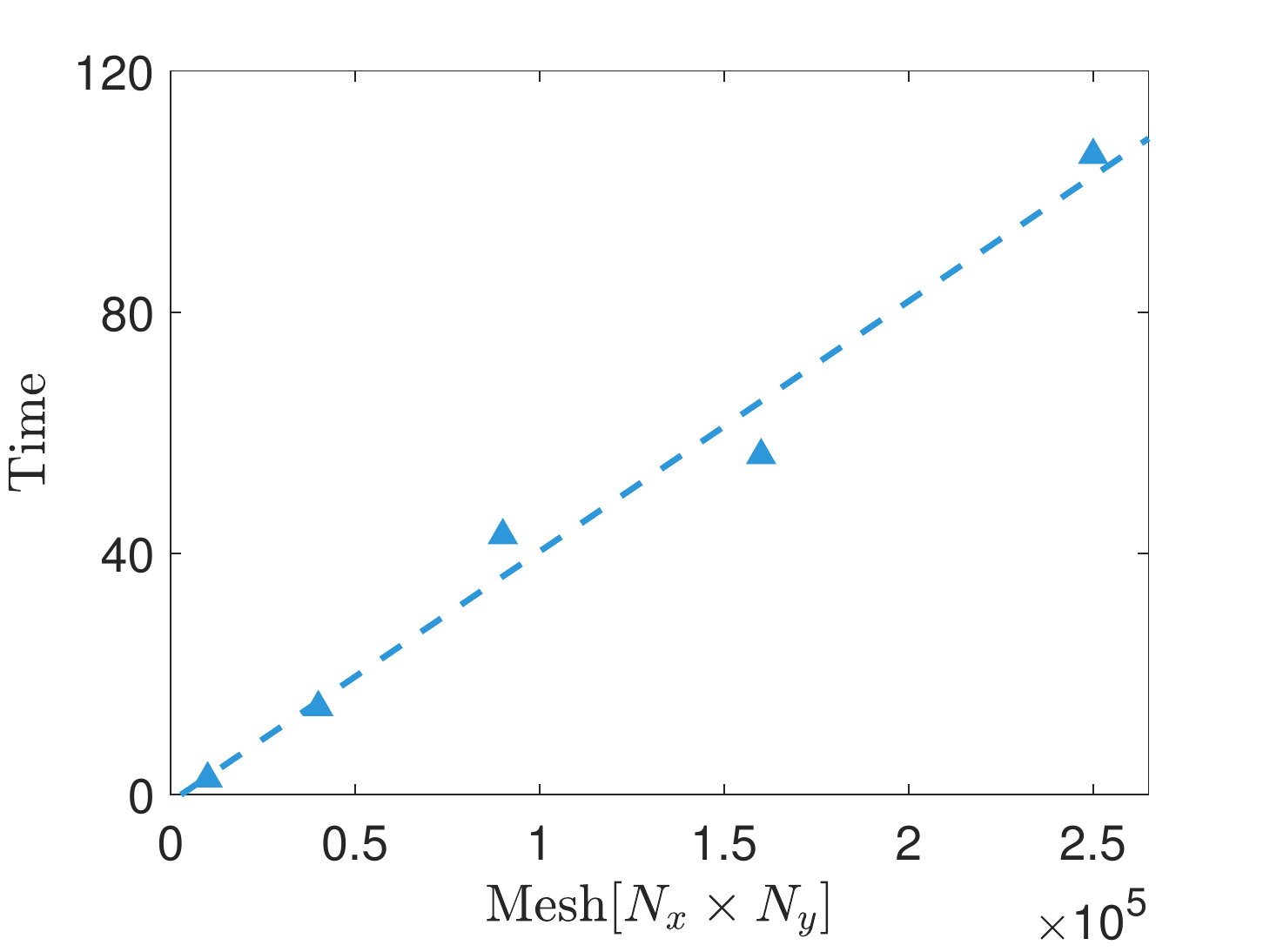}
	\caption{{The computational time of the local curl-free relaxation (shown in triangles) with $\ve_m=1$, $\ve_w=78$ and $\kappa=0.01$. The dash line is shown for reference.}}
	\label{f:LocalTime}
\end{figure}
{To further demonstrate the computational complexity, we perform numerical simulations with a variable dielectric coefficient and count the computation time spent in the local curl-free relaxation. Fig.~\ref{f:LocalTime} presents the computational time against the mesh size. Clearly, one can see that the local curl-free relaxation algorithm has linear computational complexity.}

\section{Conclusions}\label{s:Con}
In this work, we have developed a structure-preserving numerical scheme for an MANP model, which describes ion dynamics via ionic concentrations and the electric displacement. In such an MANP model, effects beyond mean-field approximations, such as ionic steric effects and Born solvation interactions, have been taken into account. The solution to the MANP model has been shown to have several intrinsic properties of importance. Based on the Slotboom transformation, a structure-preserving semi-implicit numerical scheme for the generalized Nernst-Planck equations has been proposed. To deal with the curl-free constraint, the dielectric displacement obtained from the Maxwell-Amp\`{e}re equation has been further updated with a convergent local relaxation algorithm of linear computational complexity. {Given its high efficiency, the local relaxation algorithm provides a promising alternative for numerically solving the Poisson's equation,
especially for the case with a variable coefficient.} The whole proposed numerical algorithm has been proved to preserve mass conservation, solution positivity, and energy dissipation of the MANP model at the discrete level. Such structure-preserving properties, as well as numerical accuracy, have been thoroughly tested in numerical simulations. Furthermore, numerical tests on charge dynamics of electrolyte solutions have demonstrated that the proposed numerical scheme can effectively handle large convection, arising from boundary-layer electric displacement and Born solvation interactions, up to high numerical cell P\'{e}clet numbers.

\setcounter{equation}{0}
\setcounter{subsection}{0}
\renewcommand{\theequation}{A.\arabic{equation}}
\renewcommand{\thesubsection}{A.\arabic{subsection}}
\section*{Appendix: Proof of Theorem ~\ref{thm4}}
\appendix

The following proof follows closely the idea presented in the work~\cite{LM2020}, with modifications on the treatment of the electric displacement and entropic-mean approximation in the fluxes.
\begin{proof}

It follows from~\reff{energydis} and the assumption on the time-independence of $\mu^{\ell,\text{cr}}$ that
\begin{equation}
			\begin{aligned}
				\mathcal{F}^{n+1}_h -\mathcal{F}^n_h=
				&\Delta \Omega \sum_{\ell=1}^{M} \sum_{i,j} \left[(c^{\ell,n+1}_{i,j}-c^{\ell,n}_{i,j})\left(\mu^{\ell,*}_{i,j}-q^\ell \phi_{i,j}^n\right)+   c^{\ell,n}_{i,j} \log\frac{c^{\ell,n+1}_{i,j}}{c^{\ell,n}_{i,j}} \right]\\
				& +\Delta \Omega \kappa^2 \sum_{i,j} \left(\frac{\left|D^{n+1}_{i+\frac{1}{2},j}\right|^2-\left|D^{n}_{i+\frac{1}{2},j}\right|^2}{\varepsilon_{i+\frac{1}{2},j}}+ \frac{\left|D^{n+1}_{i,j+\frac{1}{2}}\right|^2-\left|D^{n}_{i,j+\frac{1}{2}}\right|^2}{\varepsilon_{i,j+\frac{1}{2}}} \right),
			\end{aligned}
\end{equation}
		where the chemical potential	
		$$
		\mu^{\ell,*}_{i,j}=\left(g^{\ell,n}_{i,j}+  \log c^{\ell,n+1}_{i,j} \right).
		$$
		We have by mass conservation that
		$$
		\sum_{i,j} \Delta \Omega c^{\ell,n}_{i,j} \log\frac{c^{\ell,n+1}_{i,j}}{c^{\ell,n}_{i,j}} \leq \sum_{i,j} \Delta \Omega (c^{\ell,n+1}_{i,j}-c^{\ell,n}_{i,j})=0.
		$$
		Then, we have
		\begin{equation}
			\mathcal{F}^{n+1}_h-\mathcal{F}^n_h\leq -\Delta t I_1 +\Delta t^2 I_2,
		\end{equation}
		where
		$$
		I_1=-\sum_{\ell=1}^{M} \sum_{i,j} \Delta \Omega \left[ \frac{c^{\ell,n+1}_{i,j}-c^{\ell,n}_{i,j}}{\Delta t} \right] \mu_{i,j} ^{\ell,*},
		$$
		$$
		I_2=\sum_{i,j}\frac{\Delta \Omega }{\Delta t ^2} \left[\kappa^2\frac{\left|D^{n+1}_{i+\frac{1}{2},j}\right|^2-\left|D^{n}_{i+\frac{1}{2},j}\right|^2}{\varepsilon_{i+\frac{1}{2},j}}+\kappa^2 \frac{\left|D^{n+1}_{i,j+\frac{1}{2}}\right|^2-\left|D^{n}_{i,j+\frac{1}{2}}\right|^2}{\varepsilon_{i,j+\frac{1}{2}}} - \phi_{i,j}^n \sum_{\ell=1}^M  q^\ell (c^{\ell,n+1}_{i,j}-c^{\ell,n}_{i,j})\right].
		$$
		By the numerical scheme~\eqref{disc} and summation by parts, we have
		\begin{equation}
			\begin{aligned}
				I_1&=\sum_{\ell=1}^{M} \sum_{i,j} \Delta \Omega \left( \frac{J^{\ell,n}_{i+\frac{1}{2},j} - J^{\ell,n}_{i-\frac{1}{2},j}}{\Delta x} + \frac{J^{\ell,n}_{i,j+\frac{1}{2}} - J^{\ell,n}_{i,j-\frac{1}{2}}}{\Delta y} \right) \mu_{i,j} ^{\ell,*}\\
				&=- \sum_{\ell=1}^{M} \sum_{i,j} \Delta \Omega \left( J^{\ell,n}_{i+\frac{1}{2},j} \frac{\mu^{\ell,*}_{i+1,j}-\mu^{\ell,*}_{i,j}}{\Delta x} + J^{\ell,n}_{i,j+\frac{1}{2}} \frac{\mu^{\ell,*}_{i,j+1}-\mu^{\ell,*}_{i,j}}{\Delta y} \right).
			\end{aligned}\label{I}
		\end{equation}
		Since
		$$
		J_{i+\frac{1}{2},j}^{\ell,n}=-\kappa e^{-g^{\ell,n}_{i+\frac{1}{2},j}}\frac{e^{ \mu^{\ell,*}_{i+1,j}}-e^{ \mu^{\ell,*}_{i,j}}}{\Delta x}~~\text{and}~~
		J_{i,j+\frac{1}{2}}^{\ell,n}=-\kappa e^{-g^{\ell,n}_{i,j+\frac{1}{2}}}\frac{e^{\mu^{\ell,*}_{i,j+1}}-e^{\mu^{\ell,*}_{i,j} }}{\Delta y},
		$$
		we have $I_1 \geq 0$. It follows from the discrete Gauss's law~\eqref{gausslaw} that
		\[
		\begin{aligned}
		I_2=&\sum_{i,j}\frac{\Delta \Omega}{\Delta t ^2}  \left[\kappa^2\frac{\left|D^{n+1}_{i+\frac{1}{2},j}\right|^2-\left|D^{n}_{i+\frac{1}{2},j}\right|^2}{\varepsilon_{i+\frac{1}{2},j}}+\kappa^2 \frac{\left|D^{n+1}_{i,j+\frac{1}{2}}\right|^2-\left|D^{n}_{i,j+\frac{1}{2}}\right|^2}{\varepsilon_{i,j+\frac{1}{2}}} \right. \\
		&\left.- 2\kappa^2\phi_{i,j}^n \Bigg( \frac{D^{n+1}_{i+\frac{1}{2},j}-D^{n+1}_{i-\frac{1}{2},j} - D^{n}_{i+\frac{1}{2},j}+D^{n}_{i-\frac{1}{2},j} }{\Delta x}
		+\frac{D^{n+1}_{i,j+\frac{1}{2}}-D^{n+1}_{i,j-\frac{1}{2}} - D^{n}_{i,j+\frac{1}{2}}+D^{n}_{i,j-\frac{1}{2}} }{\Delta y} \Bigg) \right].
		\end{aligned}
		\]
		Notice that both $\bm{D}^*$ and $\bm{D}^{n+1}$ satisfy the {discrete} Gauss's law, according to the description of the local curl-free algorithm in Section~\ref{localsec}. By summation by parts and \eqref{dphi}, we have
		\begin{equation}
				I_2
				=\sum_{i,j} \Delta \Omega \kappa^2 \left[  \frac{\left|D^{n+1}_{i+\frac{1}{2},j}-D^{n}_{i+\frac{1}{2},j}\right|^2}{ \varepsilon_{i+\frac{1}{2},j}\big(\Delta t\big) ^2} + \frac{\left|D^{n+1}_{i,j+\frac{1}{2}}-D^{n}_{i,j+\frac{1}{2}}\right|^2}{ \varepsilon_{i,j+\frac{1}{2}}\big(\Delta t\big)^2} \right]  \geq 0.
		\end{equation}
		The energy dissipation inequality~\eqref{energyinequ} is satisfied if
		\begin{equation}
			\Delta t \leq \Delta t^{*} \leq \frac{I_1}{2 I_2},
		\end{equation}	
		where the critical time step size $\Delta t^*$ can be obtained by estimating the lower bound of $ I_1/2 I_2$.
		From the discrete Gauss's law~\eqref{gausslaw}, we get
		\begin{equation}
			\begin{aligned}
			&2\kappa^2\frac{D^{n+1}_{i+\frac{1}{2},j}-D^{n+1}_{i-\frac{1}{2},j}-D^{n}_{i+\frac{1}{2},j}+D^{n}_{i-\frac{1}{2},j}}{\Delta x} + 2\kappa^2\frac{D^{n+1}_{i,j+\frac{1}{2}}-D^{n+1}_{i,j-\frac{1}{2}}-D^{n}_{i,j+\frac{1}{2}}+D^{n}_{i,j-\frac{1}{2}}}{\Delta y}\\ &=\sum_{\ell=1}^M q^\ell  \left(c^{\ell,n+1}_{i,j}-c^{\ell,n}_{i,j} \right).
			\end{aligned}
		\end{equation}	
		Multiplication of $\left( \phi^{n+1}_{i,j}-\phi^n_{i,j} \right) \Delta \Omega$ and summation over $i,j$ lead to
		\begin{equation}\label{Dphi}
			\begin{aligned}
				&2\kappa^2 \sum_{i,j} \Delta \Omega  \left[ \frac{D^{n+1}_{i+\frac{1}{2},j}-D^{n+1}_{i-\frac{1}{2},j}-D^{n}_{i+\frac{1}{2},j}+D^{n}_{i-\frac{1}{2},j}}{\Delta x} + \right.\\ &\qquad \qquad \qquad \left.\frac{D^{n+1}_{i,j+\frac{1}{2}}-D^{n+1}_{i,j-\frac{1}{2}}-D^{n}_{i,j+\frac{1}{2}}+D^{n}_{i,j-\frac{1}{2}}}{\Delta y}\right]  \left(\phi^{n+1}_{i,j}-\phi^{n}_{i,j}\right)  \\
				&=\sum_{\ell=1}^M \sum_{i,j} \Delta \Omega q^\ell \left(c^{\ell,n+1}_{i,j}-c^{\ell,n}_{i,j} \right) \left(\phi^{n+1}_{i,j}-\phi^{n}_{i,j}\right).
			\end{aligned}
		\end{equation}
	      By summation by parts, the left hand side of~\reff{Dphi} becomes
		\begin{equation}
			\begin{aligned}
				LHS 
				&=2\kappa^2\sum_{i,j} \Delta \Omega  \left( \frac{\left|D^{n+1}_{i+\frac{1}{2},j}-D^{n}_{i+\frac{1}{2},j}\right|^2}{\varepsilon_{i+\frac{1}{2},j}} +\frac{\left|D^{n+1}_{i,j+\frac{1}{2}}-D^{n}_{i,j+\frac{1}{2}}\right|^2}{\varepsilon_{i,j+\frac{1}{2}}} \right) \\
				& \geq \frac{2\kappa^2}{\varepsilon_{\max}} \sum_{i,j} \Delta \Omega \left( \left|D^{n+1}_{i+\frac{1}{2},j}-D^{n}_{i+\frac{1}{2},j}\right|^2+ \left|D^{n+1}_{i,j+\frac{1}{2}}-D^{n}_{i,j+\frac{1}{2}}\right|^2 \right).
			\end{aligned} \label{lhs}
		\end{equation}
		By the numerical scheme~\eqref{disc} and summation by parts, the right hand side of~\reff{Dphi} becomes
		\begin{equation}
			\begin{aligned}
				RHS&=- \Delta t \sum_{\ell=1}^M \sum_{i,j} \Delta \Omega q^\ell  \left( \frac{J^{\ell,n}_{i+\frac{1}{2},j}-J^{\ell,n}_{i-\frac{1}{2},j}}{\Delta x} +\frac{J^{\ell,n}_{i,j+\frac{1}{2}}-J^{\ell,n}_{i,j-\frac{1}{2}}}{\Delta y} \right) \left(\phi^{n+1}_{i,j}-\phi^{n}_{i,j}\right)\\
				&=- \Delta t \sum_{\ell=1}^M \sum_{i,j} \Delta \Omega q^\ell \left( J^{\ell,n}_{i+\frac{1}{2},j}  \frac{D^{n+1}_{i+\frac{1}{2},j}-D^{n}_{i+\frac{1}{2},j}}{\varepsilon_{i+\frac{1}{2},j}}  + J^{\ell,n}_{i,j+\frac{1}{2}}  \frac{D^{n+1}_{i,j+\frac{1}{2}}-D^{n}_{i,j+\frac{1}{2}}}{\varepsilon_{i,j+\frac{1}{2}}} \right).
			\end{aligned}
		\end{equation}
		By the Cauchy-Schwarz inequality, we have
		\begin{equation}
			\begin{aligned}
				RHS &\leq \Delta t \sum_{\ell=1}^M \left|q^\ell \right|
				 \left\{ \sum_{i,j} \Delta \Omega \left[ \left(J^{\ell,n}_{i+\frac{1}{2},j}\right)^2+ \left(J^{\ell,n}_{i,j+\frac{1}{2}}\right)^2 \right] \right\}^{\frac{1}{2}} \\
				&\qquad \cdot \left\{ \sum_{i,j} \Delta \Omega \left[ \left( \frac{D^{n+1}_{i+\frac{1}{2},j}-D^{n}_{i+\frac{1}{2},j}}{\varepsilon_{i+\frac{1}{2},j}} \right)^2 + \left( \frac{D^{n+1}_{i,j+\frac{1}{2}}-D^{n}_{i,j+\frac{1}{2}}}{\varepsilon_{i,j+\frac{1}{2}}} \right)^2 \right]\right\} ^{\frac{1}{2}}.
			\end{aligned}
			\label{rhs}
		\end{equation}	
		Combination of~\eqref{lhs} and \eqref{rhs} leads to
		\begin{equation}
			\begin{aligned}
				&\sum_{i,j} \Delta \Omega \left( \left| D^{n+1}_{i+\frac{1}{2},j}-D^{n}_{i+\frac{1}{2},j} \right|^2 + \left| D^{n+1}_{i,j+\frac{1}{2}}-D^{n}_{i,j+\frac{1}{2}} \right|^2 \right)\\
				&\leq \frac{\Delta t^2 \varepsilon^2_{\max}}{4\kappa^4\varepsilon^2_{\min}}
				\left\{\sum_{\ell=1}^{M} \left| q^\ell \right| \left[ \sum_{i,j} \Delta \Omega \left[  \left(J^{\ell,n}_{i+\frac{1}{2},j}\right)^2+  \left(J^{\ell,n}_{i,j+\frac{1}{2}}\right)^2 \right] \right]^{\frac{1}{2}}  \right\}^2\\
				&\leq \frac{\Delta t^2 \varepsilon^2_{\max}}{4\kappa^4\varepsilon^2_{\min}} \sum_{\ell=1}^{M} \left| q^\ell \right|^2 \sum_{\ell=1}^{M} \sum_{i,j} \Delta \Omega \left[ \left(J^{\ell,n}_{i+\frac{1}{2},j}\right)^2+ \left(J^{\ell,n}_{i,j+\frac{1}{2}}\right)^2\right].
			\end{aligned}
		\end{equation}
		Thus, we have
		\begin{equation}
			I_2 \leq C \sum_{\ell=1}^{M} \sum_{i,j} \Delta \Omega \left[ \left(J^{\ell,n}_{i+\frac{1}{2},j}\right)^2+ \left(J^{\ell,n}_{i,j+\frac{1}{2}}\right)^2\right], \label{II}
		\end{equation}
		where $C= \varepsilon^2_{\max} \sum _{\ell=1}^M \left| q^\ell \right|^2 /( 4\kappa^2 \varepsilon^3_{\min})$. Therefore, we have
		\[
			\begin{aligned}
				\frac{I_1}{2I_2}
				& \geq \frac{- \sum_{\ell=1}^{M} \sum_{i,j} \Delta \Omega \left[ J^{\ell,n}_{i+\frac{1}{2},j} \frac{\mu^{\ell,*}_{i+1,j}-\mu^{\ell,*}_{i,j}}{\Delta x} + J^{\ell,n}_{i,j+\frac{1}{2}} \frac{\mu^{\ell,*}_{i,j+1}-\mu^{\ell,*}_{i,j}}{\Delta y} \right]}{2C \sum_{\ell=1}^{M} \sum_{i,j} \Delta \Omega \left[ \left(J^{\ell,n}_{i+\frac{1}{2},j}\right)^2+ \left(J^{\ell,n}_{i,j+\frac{1}{2}}\right)^2\right] }\\
				& \geq \frac{1}{2C} \min _{i,j,\ell} \left\{\frac{\mu^{\ell,*}_{i+1,j}-\mu^{\ell,*}_{i,j}}{-\Delta x J^{\ell,n}_{i+\frac{1}{2},j}},\frac{\mu^{\ell,*}_{i,j+1}-\mu^{\ell,*}_{i,j}}{-\Delta y J^{\ell,n}_{i,j+\frac{1}{2}}} \right\}.
			\end{aligned}
		\]
		Application of the mean-value theorem leads to
		\[
				\frac{I_1}{2I_2}
				\geq
				\frac{1}{2C\kappa}  \min _{i,j,\ell} \left\{ \frac{1}{ e^{-g^{\ell,n}_{i+\frac{1}{2},j}}  e^{\theta   \mu^{\ell,*}_{i+1,j}+(1-\theta)  \mu^{\ell,*}_{i,j} }}, \frac{1}{ e^{-g^{\ell,n}_{i,j+\frac{1}{2}}}  e^{\alpha   \mu^{\ell,*}_{i,j+1}+(1-\alpha)  \mu^{\ell,*}_{i,j} }  } \right\},
		\]
		where $\theta$ and $\alpha \in (0,1)$. By the entropic mean of $e^{-g^{\ell,n}_{i+1/2,j}}$,
		we have
		$$
		\begin{aligned}
			\frac{1}{ e^{-g^{\ell,n}_{i+\frac{1}{2},j}}  e^{\theta   \mu^{\ell,*}_{i+1,j}+(1-\theta)  \mu^{\ell,*}_{i,j} } }
			&= \frac{e^{-\theta g^{\ell,n}_{i+1,j}+(\theta -1) g^{\ell,n}_{i,j}}}{ \left(c^{\ell,n+1}_{i+1,j}\right)^{\theta} \left(c^{\ell,n+1}_{i,j}\right)^{1-\theta} } \frac{ e^{g^{\ell,n}_{i+1,j}}- e^{g^{\ell,n}_{i,j}}}{g^{\ell,n}_{i+1,j}-g^{\ell,n}_{i,j}}\\
			&=  \frac{1}{ \left(c^{\ell,n+1}_{i+1,j}\right)^{\theta} \left(c^{\ell,n+1}_{i,j}\right)^{1-\theta} } \frac{e^{(1-\theta)\left( g^{\ell,n}_{i+1,j}-g^{\ell,n}_{i,j} \right)}-e^{-\theta\left( g^{\ell,n}_{i+1,j}-g^{\ell,n}_{i,j} \right)}} {g^{\ell,n}_{i+1,j}-g^{\ell,n}_{i,j}} \\
			&\geq \frac{e^{-\left|g_{i+1,j}^{\ell,n}-g_{i,j}^{\ell,n}\right|}}{c_{\max}}.
		\end{aligned}
		$$
		Similarly,
		$$
		\frac{1}{ e^{-g^{\ell,n}_{i,j+\frac{1}{2}}}  e^{\alpha   \mu^{\ell,*}_{i,j+1}+(1-\alpha)  \mu^{\ell,*}_{i,j} } } \geq \frac{e^{-\left|g_{i,j+1}^{\ell,n}-g_{i,j}^{\ell,n}\right|}}{c_{\max}}.
		$$
		Thus, we have
		\begin{equation}\label{I1/I2}
			\frac{I_1}{2I_2} \geq \frac{1}{2C \kappa c_{\max}} e^{-\max_{i,j,\ell} \left\{\left|g_{i+1,j}^{\ell,n}-g_{i,j}^{\ell,n}\right|,\left|g_{i,j+1}^{\ell,n}-g_{i,j}^{\ell,n}\right|\right\}}.
		\end{equation}
		Note that the estimate~\reff{I1/I2} also works for the geometric mean and arithmetic mean approximations mentioned in Remark~\ref{Means}.
		Therefore, a choice of
	\[
		\Delta t^{*}=\frac{ 2 \kappa\varepsilon_{\min }^{3}}{\varepsilon_{\max }^2  c_{\max} \sum_{\ell=1}^{M} \left|q^\ell \right|^{2}} {\rm exp}\left[-  \max _{i,j,\ell} \left(\left| dg^{\ell,n}_{i+\frac{1}{2},j} \right|,\left| dg^{\ell,n}_{i,j+\frac{1}{2}} \right| \right) \right]
	\]
completes the proof.		
	\end{proof}

\section*{Acknowledgements}
This work is supported by the CAS AMSS-PolyU Joint Laboratory of Applied Mathematics. Z. Qiao's work is partially supported by the Hong Kong Research Grants Council (RFS Project No. RFS2021-5S03 and GRF project No. 15302919) and the Hong Kong Polytechnic University internal grant No. 1-9B7B. The work of Z. Xu and Q. Yin is partially supported by  NSFC (grant No. 12071288), Science and Technology Commission of Shanghai Municipality (grant No. 20JC1414100 and 21JC1403700), the Strategic Priority Research Program of CAS (grant No. XDA25010403) and the HPC center of Shanghai Jiao Tong University. S. Zhou's work is partially supported by the National Natural Science Foundation of China 12171319 and Science and Technology Commission of Shanghai Municipality (grant No. 21JC1403700).

\bibliographystyle{plain}


\end{document}